\newtheorem{theorem}{Theorem}[section]
\newtheorem{lemma}[theorem]{Lemma}
\newtheorem{corollary}[theorem]{Corollary}
\begin{document}

\title[Generalized Hybrid Fibonacci Numbers and Their Properties]{Investigation of Generalized Hybrid Fibonacci Numbers and Their Properties}

\author[G. Cerda-Morales]{Gamaliel Cerda-Morales}
\address{Instituto de Matem\'aticas, Pontificia Universidad Cat\'olica de Valpara\'iso, Blanco Viel 596, Valpara\'iso, Chile.}
\email{gamaliel.cerda.m@mail.pucv.cl}

%%%%%%%%%%%%%%%%%%%%%%%%%%%%%%%%%%%%%%%%%%%%%%%%%%%%%%%%%%%%%%%%%%%

\begin{abstract}
In \cite{Oz}, M. \"Ozdemir defined a new non-commutative number system called hybrid numbers. In this paper, we define the hybrid Fibonacci and Lucas numbers. This number system can be accepted as a generalization of the complex ($\textbf{i}^{2}=-1$), hyperbolic ($\textbf{h}^{2}=1$) and dual Fibonacci number ($\bm{\varepsilon}^{2}=0$) systems. Furthermore, a hybrid Fibonacci number is a number created with any combination of the complex, hyperbolic and dual numbers satisfying the relation $\textbf{ih}=-\textbf{hi}=\bm{\varepsilon}+\textbf{i}$. Then we used the Binet's formula to show some properties of the hybrid Fibonacci numbers. We get some generalized identities of the hybrid Fibonacci and hybrid Lucas numbers.

\vspace{2mm}

\noindent\textsc{2010 Mathematics Subject Classification.} 11B37, 11B39, 15A63, 53A17, 53B30, 70B05, 70B10.

\vspace{2mm}

\noindent\textsc{Keywords and phrases.} Complex numbers, Hyperbolic numbers, Dual numbers, Hybrid Number, Hyperbolic number, Horadam number, Generalized Fibonacci sequence, Generalized Fibonacci quaternion, Non-comutative rings.

\end{abstract}

%%%%%%%%%%%%%%%%%%%%%%%%%%%%%%%%%%%%%%%%%%%%%%%%%%%%%%%%%%%%%%%%%%%

\maketitle

%%%%%%%%%%%%%%%%%%%%%%%%%%%%%%%%%%%%%%%%%%%%%%%%%%%%%%%%%%%%%%%%%%%

\section{Introduction}
\setcounter{equation}{0}
The most famous generalization of complex numbers is quaternions. In 1843, William Rowan Hamilton described the set of quaternions $$\mathbb{H}=\{a+b\textbf{i}+c\textbf{j}+d\textbf{k}:\ \textbf{i}^{2}=\textbf{j}^{2}=\textbf{k}^{2}=\textbf{i}\textbf{j}\textbf{k}=-1\}$$ and James Cockle defined coquaternions (split quaternions) $$\overline{\mathbb{H}}=\{a+b\textbf{i}+c\textbf{j}+d\textbf{k}:\ \textbf{i}^{2}=-1,\ \textbf{j}^{2}=\textbf{k}^{2}=\textbf{i}\textbf{j}\textbf{k}=1\}$$ in 1849 (see \cite{Co}). Quaternions and coquaternions are used to define 3D Euclidean and Lorentzian rotations, respectively. A set of split quaternions is non-commutative and contains zero divisors, nilpotent elements, and nontrivial idempotents (see \cite{Ku-Ya,Oz-Er}). Previous studies have examined the geometric and physical applications of split quaternions, which are required in solving split quaternionic equations \cite{Fre-Li}.

In particular, Fibonacci and Lucas quaternions cover a wide range of interest in modern mathematics as they appear in the comprehensive works of \cite{Hor1,Hor2}. For example, the Fibonacci quaternion denoted by $Q_{F,n}$, is the $n$-th term of the sequence where each term is the sum of the two previous terms beginning with the initial values $Q_{F,0}=\textbf{i}+\textbf{j}+2\textbf{k}$ and $Q_{F,1}=1+\textbf{i}+2\textbf{j}+3\textbf{k}$. The well-known Fibonacci quaternion $Q_{F,n}$ is defined as
\begin{equation}\label{Hor}
Q_{F,n}=F_{n}+\textbf{i}F_{n+1}+\textbf{j}F_{n+2}+\textbf{k}F_{n+3}
\end{equation} 
and the Lucas quaternion is defined as $Q_{L,n}=L_{n}+\textbf{i}L_{n+1}+\textbf{j}L_{n+2}+\textbf{k}L_{n+3}$ for $n\geq0$, where $F_{n}$ and $L_{n}$ are $n$-th Fibonacci and Lucas number, respectively.

Ipek \cite{Ipe} studied the $(p,q)$-Fibonacci quaternions $Q_{\mathcal{F},n}$ which is defined as 
\begin{equation}\label{Ipe}
Q_{\mathcal{F},n}=pQ_{\mathcal{F},n-1}+qQ_{\mathcal{F},n-2},\ n\geq2
\end{equation}
with initial conditions $Q_{\mathcal{F},0}=\textbf{i}+p\textbf{j}+(p^{2}+q)\textbf{k}$, $Q_{\mathcal{F},1}=1+p\textbf{i}+(p^{2}+q)\textbf{j}+(p^{3}+2pq)\textbf{k}$ and $p^{2}+4q>0$. If $p=q=1$, we get the classical Fibonacci quaternion $Q_{F,n}$ \cite{Hal1}. If $p=2q=2$, we get the Pell quaternion $Q_{P,n}=P_{n}+\textbf{i}P_{n+1}+\textbf{j}P_{n+2}+\textbf{k}P_{n+3}$ (see \cite{Ci-Ipe}), where $P_{n}$ is the $n$-th Pell number.

The well-known Binet's formulas for $(p,q)$-Fibonacci quaternion and $(p,q)$-Lucas quaternion, see \cite{Ipe}, are given by
\begin{equation}\label{Binet}
Q_{\mathcal{F},n}=\frac{\underline{\alpha}\alpha^{n}-\underline{\beta}\beta^{n}}{\alpha-\beta}\ \textrm{and}\ Q_{\mathcal{L},n}=\underline{\alpha}\alpha^{n}+\underline{\beta}\beta^{n},
\end{equation}
where $\alpha,\beta$ are roots of the characteristic equation $t^{2}-pt-q=0$, and $\underline{\alpha}=1+\alpha \textbf{i}+\alpha^{2} \textbf{j}+\alpha^{3} \textbf{k}$ and $\underline{\beta}=1+\beta \textbf{i}+\beta^{2} \textbf{j}+\beta^{3} \textbf{k}$. We note that $\alpha+\beta=p$, $\alpha \beta=-q$ and $\alpha-\beta=\sqrt{p^{2}+4q}$.

The generalized of Fibonacci quaternion $Q_{w,n}$ is defined recently by Halici and Karata\c{s} in \cite{Hal3} as $Q_{w,0}=a+b\textbf{i}+(pb+qa)\textbf{j}+((p^{2}+q)b+pqa)\textbf{k}$, $Q_{w,1}=b+(pb+qa)\textbf{i}+((p^{2}+q)b+pqa)\textbf{j}+((p^{3}+2pq)b+q(p^{2}+q)a)\textbf{k}$ and $Q_{w,n}=pQ_{w,n-1}+qQ_{w,n-2}$, for $n\geq2$ which we call the generalized Fibonacci or Horadam quaternions. So, each term of the generalized Fibonacci sequence $\{Q_{w,n}\}_{n\geq0}$ is called generalized Fibonacci quaternion. 

The Binet formula for generalized Fibonacci quaternion $Q_{w,n}$, see \cite{Hal3}, is given by
\begin{equation}\label{Binet2}
Q_{w,n}=\frac{A\underline{\alpha}\alpha^{n}-B\underline{\beta}\beta^{n}}{\alpha-\beta},
\end{equation}
where $A=b-a\beta$, $B=b-a\alpha$, and $\alpha,\beta$ are roots of the characteristic equation $t^{2}-pt-q=0$, $\underline{\alpha}=1+\alpha \textbf{i}+\alpha^{2} \textbf{j}+\alpha^{3} \textbf{k}$ and $\underline{\beta}=1+\beta \textbf{i}+\beta^{2} \textbf{j}+\beta^{3} \textbf{k}$. If $a=0$ and $b=1$, we get the classical $(p,q)$-Fibonacci quaternion $Q_{\mathcal{F},n}$. If $a=2$ and $b=p$, we get the $(p,q)$-Lucas quaternion $Q_{\mathcal{L},n}$.

On the other hand, Olariu \cite{Ola} defined a different generalization of $n$-dimensional complex numbers naming them twocomplex numbers, threecomplex numbers. Olariu used the name twocomplex numbers for hyperbolic numbers. He studied the geometrical and the algebraic properties of these numbers. For example, the set of threecomplex numbers was defined as $$\mathbb{C}_{3}=\{a+b\textbf{h}+c\textbf{k}:\ a,b,c\in \mathbb{R},\ \textbf{h}^{2}=\textbf{k},\ \textbf{k}^{2}=\textbf{h},\ \textbf{hk}=1\}.$$ In 2004, Anthony Harkin and Joseph Harkin \cite{Har} generalized two dimensional complex numbers as $\mathbb{C}_{p}=\{\textbf{z}=a+b\textbf{i}:\ a,b\in \mathbb{R},\ \textbf{i}^{2}=p\}$. They gave some trigonometric relations for this generalization. After, Catoni et al. \cite{Ca} defined two dimensional hypercomplex numbers as
$$\mathbb{C}_{\alpha,\beta}=\{\textbf{z}=x+y\textbf{i}:\ x,y\in \mathbb{R},\ \textbf{i}^{2}=\alpha +\textbf{i}\beta\}$$
and extended the relationship between these numbers and Euclidean and semi-Euclidean geometry. Furthermore, this generalization is also expressible as a quotient ring $\mathbb{R}[x]/ (x^{2}-\beta x-\alpha )$. 

In 2017, Zaripov \cite{Za} presented a theory of commutative two-dimensional conformal hyperbolic numbers as a generalization of the theory of hyperbolic numbers. Recently, \"Ozdemir \cite{Oz} defined a new generalization of complex, hyperbolic and dual numbers different from above generalizations. In this generalization, the author gave a system of such numbers that consists of all three number systems together. This set was called hybrid numbers, denoted by $\mathbb{K}$, is defined as
\begin{equation}\label{Hy}
\mathbb{K}=\left\lbrace \textbf{z}=a+b\textbf{i}+c\bm{\varepsilon}+d\textbf{h}: a,b,c,d\in \mathbb{R},\ \begin{array}{c} \textbf{i}^{2}=-1,\ \bm{\varepsilon}^{2}=0,\ \textbf{h}^{2}=1,\\ 
 \textbf{ih}=-\textbf{hi}=\bm{\varepsilon}+\textbf{i} \end{array}\right\rbrace.
\end{equation}
Two hybrid numbers are equal if all their components are equal, one by one. The sum of two hybrid numbers is defined by summing their components. Addition operation in the hybrid numbers is both commutative and associative. Zero is the null element. With respect to the addition operation, the inverse element of $\textbf{z}$ is $-\textbf{z}$, which is defined as having all the components of $\textbf{z}$ changed in their signs. This implies that, $(\mathbb{K}, +)$ is an Abelian group.

The hybridian product is obtained by distributing the terms on the right as in ordinary algebra, preserving that the multiplication order of the units and then writing the values of followings replacing each product of units by the equalities $\textbf{i}^{2}=-1,\ \bm{\varepsilon}^{2}=0,\ \textbf{h}^{2}=1$ and $\textbf{ih}=-\textbf{hi}=\bm{\varepsilon}+\textbf{i}$. Using these equalities we can find the product of any two hybrid units. For example, let's find $\textbf{i}\bm{\varepsilon}$. For this, let's multiply $\textbf{ih}=\bm{\varepsilon}+\textbf{i}$ by $\textbf{i}$ from the left. Thus, we get $\textbf{i}\bm{\varepsilon}=1-\textbf{h}$. If we continue in a similar way, we get the following multiplication table.

\begin{table}[ht] 
\caption{The multiplication table for the basis of $\mathbb{K}$.} 
\centering      
\begin{tabular}{lllll}
\hline
$\times $ & $1$ & $\textbf{i}$ & $\bm{\varepsilon}$ & $\textbf{h}$  \\ \hline
$1$ & $1$ & $\textbf{i}$ & $\bm{\varepsilon}$ & $\textbf{h}$  
\\ 
$\textbf{i}$ & $\textbf{i}$ & $-1$ & $1-\textbf{h}$ & $\bm{\varepsilon}+\textbf{i}$
\\ 
$\bm{\varepsilon}$ & $\bm{\varepsilon}$ & $1+\textbf{h}$ & $0$ & $-\bm{\varepsilon}$
\\ 
$\textbf{h}$ & $\textbf{h}$ & $-(\bm{\varepsilon}+\textbf{i})$ & $\bm{\varepsilon}$ & $1$ 
\\ \hline
\end{tabular}
\label{table:1}  
\end{table}
The table \ref{table:1} shows us that the multiplication operation in the hybrid numbers is not commutative. But it has the property of associativity. The conjugate of a hybrid number $\textbf{z}=a+b\textbf{i}+c\bm{\varepsilon}+d\textbf{h}$, denoted by $\overline{\textbf{z}}$, is defined as $\textbf{z}=a-b\textbf{i}-c\bm{\varepsilon}-d\textbf{h}$ as in the quaternions. The conjugate of the sum of hybrid numbers is equal to the sum of their conjugates. Also, according to the hybridian product, we have $\textbf{z}\overline{\textbf{z}}=\overline{\textbf{z}}\textbf{z}$. The real number $$\mathcal{C}(\textbf{z})=\textbf{z}\overline{\textbf{z}}=\overline{\textbf{z}}\textbf{z}=a^{2}+(b-c)^{2}-c^{2}-d^{2}$$ is called the character of the hybrid number $\textbf{z}=a+b\textbf{i}+c\bm{\varepsilon}+d\textbf{h}$. The real number  $\sqrt{\mathcal{C}(\textbf{z})}$ will be called the norm of the hybrid number $\textbf{z}$ and will be denoted by $\lVert \textbf{z}\rVert_{\mathbb{K}}$.

In this study, we define the hybrid Fibonacci and hybrid Lucas numbers. We give the generating functions and Binet formulas for these numbers. Moreover, the well-known properties e.g. Cassini and Catalan identities have been obtained for these numbers.

\section{Generalized Hybrid Fibonacci and Lucas Numbers}
\setcounter{equation}{0}
We define the $n$-th hybrid $(p,q)$-Fibonacci and hybrid $(p,q)$-Lucas numbers, respectively, by the following recurrence relations
\begin{equation}\label{h:f}
\mathbb{H}\mathcal{F}_{n}=\mathcal{F}_{n}+\mathcal{F}_{n+1}\textbf{i}+\mathcal{F}_{n+2}\bm{\varepsilon}+\mathcal{F}_{n+3}\textbf{h}
\end{equation}
and
\begin{equation}\label{h:l}
\mathbb{H}\mathcal{L}_{n}=\mathcal{L}_{n}+\mathcal{L}_{n+1}\textbf{i}+\mathcal{L}_{n+2}\bm{\varepsilon}+\mathcal{L}_{n+3}\textbf{h},
\end{equation}
where $\mathcal{F}_{n}$ and $\mathcal{L}_{n}$ are the $n$-th $(p,q)$-Fibonacci number and $(p,q)$-Lucas number, respectively. Here $\{\textbf{i}, \bm{\varepsilon}, \textbf{h}\}$ satisfies the multiplication rule given in the Table \ref{table:1}.

By some elementary calculations we find the following recurrence relations for the generalized hybrid Fibonacci and Lucas numbers from (\ref{h:f}) and (\ref{h:l}):
\begin{equation}
\begin{aligned}
p\mathbb{H}\mathcal{F}_{n}+q\mathbb{H}\mathcal{F}_{n-1}&=p(\mathcal{F}_{n}+\mathcal{F}_{n+1}\textbf{i}+\mathcal{F}_{n+2}\bm{\varepsilon}+\mathcal{F}_{n+3}\textbf{h})\\
&\ \ +q(\mathcal{F}_{n-1}+\mathcal{F}_{n}\textbf{i}+\mathcal{F}_{n+1}\bm{\varepsilon}+\mathcal{F}_{n+2}\textbf{h})\\
&=(p\mathcal{F}_{n}+q\mathcal{F}_{n-1})+(p\mathcal{F}_{n+1}+q\mathcal{F}_{n})\textbf{i}+(p\mathcal{F}_{n+2}+q\mathcal{F}_{n+1})\bm{\varepsilon}\\
&\ \ +(p\mathcal{F}_{n+3}+q\mathcal{F}_{n+2})\textbf{h}\\
&=\mathcal{F}_{n+1}+\mathcal{F}_{n+2}\textbf{i}+\mathcal{F}_{n+3}\bm{\varepsilon}+\mathcal{F}_{n+4}\textbf{h}\\
&=\mathbb{H}\mathcal{F}_{n+1}
\end{aligned} \label{sum}
\end{equation}
and similarly $\mathbb{H}\mathcal{L}_{n+1}=p\mathbb{H}\mathcal{L}_{n}+q\mathbb{H}\mathcal{L}_{n-1}$, for $n\geq1$.

In this paper, following Halici and Karata\c{s} \cite{Hal3}, we define the generalized hybrid Fibonacci numbers as
\begin{equation}\label{h:g}
\mathbb{H}\mathcal{J}_{n}=p\mathbb{H}\mathcal{J}_{n-1}+q\mathbb{H}\mathcal{J}_{n-2},\ n\geq 2,
\end{equation}
where $\mathbb{H}\mathcal{J}_{0}=a+b\textbf{i}+(pb+qa)\bm{\varepsilon}+((p^{2}+q)b+pqa)\textbf{h}$ and $\mathbb{H}\mathcal{J}_{1}=b+(pb+qa)\textbf{i}+((p^{2}+q)b+pqa)\bm{\varepsilon}+((p^{3}+2pq)b+q(p^{2}+q)a)\textbf{h}$.

So, each term of the generalized hybrid Fibonacci sequence $\{\mathbb{H}\mathcal{J}_{n}\}_{n\geq0}$ is called generalized hybrid Fibonacci number. Furthermore, if $a=0$ and $b=1$, we get the hybrid $(p,q)$-Fibonacci number $\mathbb{H}\mathcal{F}_{n}$. If $a=2$ and $b=p$, we get the hybrid $(p,q)$-Lucas number $\mathbb{H}\mathcal{L}_{n}$.

Generating functions for the generalized hybrid Fibonacci numbers are given in the next theorem.
\begin{theorem}
The generating function for the generalized hybrid Fibonacci number is
\begin{equation}\label{gg}
\sum_{r=0}^{\infty}\mathbb{H}\mathcal{J}_{r}t^{r}=\frac{\left\lbrace \begin{array}{c} a+b\textbf{i}+(pb+qa)\bm{\varepsilon}+((p^{2}+q)b+pqa)\textbf{h}\\ 
+t((b-pa)+qa\textbf{i}+qb\bm{\varepsilon}+(pqb+q^{2}a)\textbf{h})\end{array}\right\rbrace}{1-pt-qt^2}.
\end{equation}
\end{theorem}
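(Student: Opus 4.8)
The plan is to use the standard generating-function technique driven by the defining recurrence (\ref{h:g}). First I would set $G(t)=\sum_{r=0}^{\infty}\mathbb{H}\mathcal{J}_{r}t^{r}$ and form the three series $G(t)$, $ptG(t)$ and $qt^{2}G(t)$, reindexing the latter two so that all of them are expressed as power series in $t^{r}$:
\begin{equation}
ptG(t)=\sum_{r=1}^{\infty}p\mathbb{H}\mathcal{J}_{r-1}t^{r},\qquad qt^{2}G(t)=\sum_{r=2}^{\infty}q\mathbb{H}\mathcal{J}_{r-2}t^{r}.
\end{equation}
Since the recursion only mixes each hybrid coefficient with the real scalars $p$ and $q$, the noncommutativity of the hybridian product never intervenes, and these manipulations are exactly as in the scalar case.

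Then I would compute the product $(1-pt-qt^{2})G(t)$ by subtracting the two shifted series from $G(t)$ term by term. For every index $r\geq2$ the resulting coefficient is $\mathbb{H}\mathcal{J}_{r}-p\mathbb{H}\mathcal{J}_{r-1}-q\mathbb{H}\mathcal{J}_{r-2}$, which vanishes identically by (\ref{h:g}). Hence the entire tail collapses and only the $r=0$ and $r=1$ contributions survive, yielding
\begin{equation}
(1-pt-qt^{2})G(t)=\mathbb{H}\mathcal{J}_{0}+\left(\mathbb{H}\mathcal{J}_{1}-p\mathbb{H}\mathcal{J}_{0}\right)t.
\end{equation}

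It remains to verify that the linear coefficient $\mathbb{H}\mathcal{J}_{1}-p\mathbb{H}\mathcal{J}_{0}$ matches the second bracketed block in the claimed numerator of (\ref{gg}). This is a direct component-wise computation with the explicit initial values: subtracting $p\mathbb{H}\mathcal{J}_{0}$ from $\mathbb{H}\mathcal{J}_{1}$ gives $b-pa$ for the real part, $qa$ for the $\textbf{i}$-part, $qb$ for the $\bm{\varepsilon}$-part, and $pqb+q^{2}a$ for the $\textbf{h}$-part. The only mildly delicate piece is the $\textbf{h}$-coordinate, where one must check that $\bigl((p^{3}+2pq)b+q(p^{2}+q)a\bigr)-\bigl((p^{3}+pq)b+p^{2}qa\bigr)$ collapses to $pqb+q^{2}a$; the other three coordinates are immediate. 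Since each coordinate is a real scalar, I expect no genuine obstacle here, only the bookkeeping in the $\textbf{h}$-component. Dividing by $1-pt-qt^{2}$ then reproduces exactly the right-hand side of (\ref{gg}).
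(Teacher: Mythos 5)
Your proposal is correct and follows essentially the same route as the paper: multiply $G(t)$ by $1-pt-qt^{2}$, use the recurrence (\ref{h:g}) to annihilate all coefficients of $t^{r}$ for $r\geq 2$, and then substitute the explicit initial values $\mathbb{H}\mathcal{J}_{0}$ and $\mathbb{H}\mathcal{J}_{1}$ into $\mathbb{H}\mathcal{J}_{0}+t(\mathbb{H}\mathcal{J}_{1}-p\mathbb{H}\mathcal{J}_{0})$. Your component-wise check of $\mathbb{H}\mathcal{J}_{1}-p\mathbb{H}\mathcal{J}_{0}$, including the $\textbf{h}$-coordinate, is accurate, so nothing further is needed.
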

\begin{proof}
Assuming that the generating function of the hybrid number $\{\mathbb{H}\mathcal{J}_{n}\}$ has the form $G(t)=\sum_{r=0}^{\infty}\mathbb{H}\mathcal{J}_{r}t^{r}$, we obtain that
\begin{align*}
(1-pt-qt^2)G(t)&=(\mathbb{H}\mathcal{J}_{0}+\mathbb{H}\mathcal{J}_{1}t+ \cdots ) -p(\mathbb{H}\mathcal{J}_{0}t+\mathbb{H}\mathcal{J}_{1}t^{2}+ \cdots ) \\
&\ \ -q(\mathbb{H}\mathcal{J}_{0}t^{2}+\mathbb{H}\mathcal{J}_{1}t^{3}+ \cdots ) \\
&=\mathbb{H}\mathcal{J}_{0}+t(\mathbb{H}\mathcal{J}_{1}-p\mathbb{H}\mathcal{J}_{0}),
\end{align*}
since $\mathbb{H}\mathcal{J}_{n}=p\mathbb{H}\mathcal{J}_{n-1}+q\mathbb{H}\mathcal{J}_{n-2}$, $n\geq 2$ and the coefficients of $t^{n}$ for $n\geq 2$ are equal to zero. In equivalent form is
\begin{align*}
\sum_{r=0}^{\infty}\mathbb{H}\mathcal{J}_{r}t^{r}&=\frac{\mathbb{H}\mathcal{J}_{0}+t(\mathbb{H}\mathcal{J}_{1}-p\mathbb{H}\mathcal{J}_{0})}{1-pt-qt^2}\\
&=\frac{\left\lbrace \begin{array}{c} a+b\textbf{i}+(pb+qa)\bm{\varepsilon}+((p^{2}+q)b+pqa)\textbf{h}\\ 
+t((b-pa)+qa\textbf{i}+qb\bm{\varepsilon}+(pqb+q^{2}a)\textbf{h})\end{array}\right\rbrace}{1-pt-qt^2}.
\end{align*}
So, the theorem is proved.
\end{proof}

The next theorem gives the Binet formulas for the generalized hybrid Fibonacci numbers.
\begin{theorem}
For any integer $n\geq 0$, the $n$-th generalized Fibonacci number is
\begin{equation}\label{binH}
\mathbb{H}\mathcal{J}_{n}=\frac{A\underline{\bm{\alpha}}\alpha^{n}-B\underline{\bm{\beta}}\beta^{n}}{\alpha-\beta},
\end{equation}
where $A=b-a\beta$, $B=b-a\alpha$, and $\alpha,\beta$ are roots of the characteristic equation $t^{2}-pt-q=0$, $\underline{\bm{\alpha}}=1+\alpha \textbf{i}+\alpha^{2} \bm{\varepsilon}+\alpha^{3} \textbf{h}$ and $\underline{\bm{\beta}}=1+\beta \textbf{i}+\beta^{2} \bm{\varepsilon}+\beta^{3} \textbf{h}$. If $a=0$ and $b=1$, we get the hybrid $(p,q)$-Fibonacci number $\mathbb{H}\mathcal{F}_{n}$. If $a=2$ and $b=p$, we get the hybrid $(p,q)$-Lucas number $\mathbb{H}\mathcal{L}_{n}$.
\end{theorem}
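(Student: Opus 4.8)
The plan is to reduce the hybrid identity to the already-known scalar Binet formula for the Horadam $(p,q)$-sequence, exploiting the fact that $\mathbb{H}\mathcal{J}_{n}$ is nothing but a fixed $\mathbb{R}$-linear combination of four consecutive Horadam numbers. Let $w_{n}$ denote the scalar sequence with $w_{0}=a$, $w_{1}=b$ and $w_{n}=pw_{n-1}+qw_{n-2}$; comparing with the stated initial data one checks $w_{2}=pb+qa$ and $w_{3}=(p^{2}+q)b+pqa$, so that $\mathbb{H}\mathcal{J}_{0}=w_{0}+w_{1}\textbf{i}+w_{2}\bm{\varepsilon}+w_{3}\textbf{h}$ and $\mathbb{H}\mathcal{J}_{1}=w_{1}+w_{2}\textbf{i}+w_{3}\bm{\varepsilon}+w_{4}\textbf{h}$. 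First I would establish, by induction on $n$ using the recurrence (\ref{h:g}) together with $w_{n}=pw_{n-1}+qw_{n-2}$ applied in each of the four slots, the clean component form $\mathbb{H}\mathcal{J}_{n}=w_{n}+w_{n+1}\textbf{i}+w_{n+2}\bm{\varepsilon}+w_{n+3}\textbf{h}$ for all $n\geq 0$.

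Next I would invoke the classical scalar Binet formula $w_{m}=\frac{A\alpha^{m}-B\beta^{m}}{\alpha-\beta}$ with $A=b-a\beta$ and $B=b-a\alpha$, whose validity is immediate since the right-hand side obeys the recurrence generated by $t^{2}-pt-q=0$ and reproduces $w_{0}=a$, $w_{1}=b$ via $\alpha-\beta=\sqrt{p^{2}+4q}$. Substituting $w_{n+k}=\frac{A\alpha^{n+k}-B\beta^{n+k}}{\alpha-\beta}$ for $k=0,1,2,3$ into the component form and collecting the powers $\alpha^{n},\beta^{n}$ yields
\begin{equation*}
\mathbb{H}\mathcal{J}_{n}=\frac{A\alpha^{n}(1+\alpha\textbf{i}+\alpha^{2}\bm{\varepsilon}+\alpha^{3}\textbf{h})-B\beta^{n}(1+\beta\textbf{i}+\beta^{2}\bm{\varepsilon}+\beta^{3}\textbf{h})}{\alpha-\beta},
\end{equation*}
which is exactly (\ref{binH}) after recognizing the two parenthesized hybrid numbers as $\underline{\bm{\alpha}}$ and $\underline{\bm{\beta}}$.

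An equally clean alternative, which I would probably prefer to write up, bypasses the component lemma: set $R_{n}=\frac{A\underline{\bm{\alpha}}\alpha^{n}-B\underline{\bm{\beta}}\beta^{n}}{\alpha-\beta}$ directly and verify (i) $R_{0}=\mathbb{H}\mathcal{J}_{0}$ and $R_{1}=\mathbb{H}\mathcal{J}_{1}$ by matching the four real coordinates, and (ii) $pR_{n-1}+qR_{n-2}=R_{n}$, using $p\alpha^{n-1}+q\alpha^{n-2}=\alpha^{n-2}(p\alpha+q)=\alpha^{n}$ and the analogue for $\beta$; uniqueness of solutions of the second-order recurrence then forces $R_{n}=\mathbb{H}\mathcal{J}_{n}$. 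In either route the only point requiring a word of care is that $\alpha,\beta,A,B$ are real scalars, hence commute with the fixed hybrid units $\textbf{i},\bm{\varepsilon},\textbf{h}$; the non-commutativity recorded in Table \ref{table:1} never enters, because no product of two hybrid units is ever formed and each summand is merely a real scaling of $\underline{\bm{\alpha}}$ or $\underline{\bm{\beta}}$. Finally I would record the two specializations by substituting $(a,b)=(0,1)$, giving $A=B=1$ and recovering $\mathbb{H}\mathcal{F}_{n}$, and $(a,b)=(2,p)$, where $\alpha+\beta=p$ forces $A=p-2\beta=\alpha-\beta$ and $B=p-2\alpha=-(\alpha-\beta)$, collapsing (\ref{binH}) to the Lucas normalization $\mathbb{H}\mathcal{L}_{n}=\underline{\bm{\alpha}}\alpha^{n}+\underline{\bm{\beta}}\beta^{n}$ in parallel with (\ref{Binet}).
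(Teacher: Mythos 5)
Your proposal is correct, and both of your suggested routes would work; they differ in mechanism from the paper's argument, though all three ultimately rest on the same ingredients. The paper proves (\ref{binH}) by forming the two linear combinations $\alpha\,\mathbb{H}\mathcal{J}_{n+1}+q\,\mathbb{H}\mathcal{J}_{n}$ and $\beta\,\mathbb{H}\mathcal{J}_{n+1}+q\,\mathbb{H}\mathcal{J}_{n}$, collapsing each slot via the scalar identity $\alpha\mathcal{J}_{n+1}+q\mathcal{J}_{n}=\alpha^{n}(\alpha b+qa)=A\alpha^{n+1}$ (and its $\beta$-analogue), and then subtracting the two resulting equations to isolate $\mathbb{H}\mathcal{J}_{n+1}$. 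Your first route instead substitutes the scalar Binet formula directly into each of the four components and regroups; your second route verifies that the right-hand side of (\ref{binH}) satisfies the recurrence (\ref{h:g}) and the two initial conditions and appeals to uniqueness. A genuine merit of your write-up is that you make explicit a step the paper silently assumes: that the hybrid recurrence with the stated initial values does produce the clean component form $\mathbb{H}\mathcal{J}_{n}=\mathcal{J}_{n}+\mathcal{J}_{n+1}\textbf{i}+\mathcal{J}_{n+2}\bm{\varepsilon}+\mathcal{J}_{n+3}\textbf{h}$, which requires the small induction you describe (your second route avoids even this). You are also right, and it is worth saying once, that non-commutativity plays no role here because every term is a real scalar multiple of the fixed elements $\underline{\bm{\alpha}}$, $\underline{\bm{\beta}}$; the paper's proof uses this tacitly as well. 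Your computation of the two specializations $A=B=1$ and $A=-B=\alpha-\beta$ is correct and matches the normalizations in (\ref{Binet}).
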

\begin{proof}
For the Eq. (\ref{binH}), we have
\begin{align*}
\alpha \mathbb{H}\mathcal{J}_{n+1}+q\mathbb{H}\mathcal{J}_{n}&=\alpha(\mathcal{J}_{n+1}+\mathcal{J}_{n+2}\textbf{i}+\mathcal{J}_{n+3} \bm{\varepsilon}+\mathcal{J}_{n+4}\textbf{h})\\
&\ \ +q(\mathcal{J}_{n}+\mathcal{J}_{n+1} \textbf{i}+\mathcal{J}_{n+2}\bm{\varepsilon}+\mathcal{J}_{n+3}\textbf{h})\\
&=(\alpha \mathcal{J}_{n+1} +q\mathcal{J}_{n})+(\alpha \mathcal{J}_{n+2} +q\mathcal{J}_{n+1})\textbf{i}+(\alpha \mathcal{J}_{n+3} +q\mathcal{J}_{n+2}) \bm{\varepsilon}\\
&\ \ +(\alpha \mathcal{J}_{n+4} +q\mathcal{J}_{n+3})\textbf{h}.
\end{align*}
From the identity $\alpha \mathcal{J}_{n+1} +q\mathcal{J}_{n}=\alpha^{n}(\alpha b +qa)$, we obtain
\begin{equation}\label{ge1}
\alpha \mathbb{H}\mathcal{J}_{n+1} +q\mathbb{H}\mathcal{J}_{n}=\underline{\bm{\alpha}}\alpha^{n}(\alpha b +qa).
\end{equation}
Similarly, we have
\begin{equation}\label{ge2}
\beta \mathbb{H}\mathcal{J}_{n+1} +q\mathbb{H}\mathcal{J}_{n}=\underline{\bm{\beta}}\beta^{n}(\beta b +qa).
\end{equation}
Subtracting Eq. (\ref{ge2}) from Eq. (\ref{ge1}) gives $$(\alpha-\beta)\mathbb{H}\mathcal{J}_{n+1} =A\underline{\bm{\alpha}}\alpha^{n+1}-B\underline{\bm{\beta}}\beta^{n+1},$$ where $A=b-a\beta$, $B=b-a\alpha$ and $\alpha,\beta$ are roots of the characteristic equation $t^{2}-pt-q=0$. Furthermore, $\underline{\bm{\alpha}}=1+\alpha \textbf{i}+\alpha^{2} \bm{\varepsilon}+\alpha^{3} \textbf{h}$ and $\underline{\bm{\beta}}=1+\beta \textbf{i}+\beta^{2} \bm{\varepsilon}+\beta^{3} \textbf{h}$. So, the theorem is proved.
\end{proof}

There are three well-known identities for generalized Fibonacci numbers, namely, Catalan's, Cassini's, and d'Ocagne's identities. The proofs of these identities are based on Binet formulas. We can obtain these types of identities for generalized hybrid Fibonacci numbers using the Binet formula for $\mathbb{H}\mathcal{J}_{n}$. Then, we require $\underline{\bm{\alpha}}\underline{\bm{\beta}}$ and $\underline{\bm{\beta}}\underline{\bm{\alpha}}$. These products are given in the next lemma.
\begin{lemma}
We have
\begin{equation}\label{eq:1}
\underline{\bm{\alpha}}\underline{\bm{\beta}}=\mathbb{H}\mathcal{L}_{0}-(q^{3}+pq-q+1)+q(\alpha-\beta)(\mathbb{H}\mathcal{F}_{0}-\bm{\omega}),
\end{equation}
and
\begin{equation}\label{eq:2}
\underline{\bm{\beta}}\underline{\bm{\alpha}}=\mathbb{H}\mathcal{L}_{0}-(q^{3}+pq-q+1)-q(\alpha-\beta)(\mathbb{H}\mathcal{F}_{0}-\bm{\omega}),
\end{equation}
where $\bm{\omega}=(1-p)\textbf{i}-q\bm{\varepsilon}+(p^{2}+q+1)\textbf{h}$ and $\alpha-\beta =\sqrt{p^{2}+4q}$.
\end{lemma}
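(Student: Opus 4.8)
The plan is to obtain (\ref{eq:1}) by a direct expansion of the hybridian product. Writing $\underline{\bm{\alpha}}=1+\alpha\textbf{i}+\alpha^{2}\bm{\varepsilon}+\alpha^{3}\textbf{h}$ and $\underline{\bm{\beta}}=1+\beta\textbf{i}+\beta^{2}\bm{\varepsilon}+\beta^{3}\textbf{h}$, I would multiply them out into sixteen elementary products and reduce each one by Table \ref{table:1}. Since the hybridian product is noncommutative, the order of the factors matters: the left factor is $\underline{\bm{\alpha}}$, so I must use $\textbf{i}\bm{\varepsilon}=1-\textbf{h}$, $\bm{\varepsilon}\textbf{i}=1+\textbf{h}$, $\textbf{i}\textbf{h}=\bm{\varepsilon}+\textbf{i}$, $\textbf{h}\textbf{i}=-(\bm{\varepsilon}+\textbf{i})$, $\bm{\varepsilon}\textbf{h}=-\bm{\varepsilon}$, $\textbf{h}\bm{\varepsilon}=\bm{\varepsilon}$, together with $\textbf{i}^{2}=-1$, $\bm{\varepsilon}^{2}=0$, $\textbf{h}^{2}=1$, taking care never to confuse $\textbf{i}\bm{\varepsilon}$ with $\bm{\varepsilon}\textbf{i}$ nor $\textbf{i}\textbf{h}$ with $\textbf{h}\textbf{i}$.

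After collecting the result as $c_{0}+c_{1}\textbf{i}+c_{2}\bm{\varepsilon}+c_{3}\textbf{h}$, the key bookkeeping step is to simplify each scalar $c_{k}$ using $\alpha+\beta=p$, $\alpha\beta=-q$, and the derived relations $\alpha^{2}+\beta^{2}=p^{2}+2q$ and $\alpha^{3}+\beta^{3}=p^{3}+3pq$, while splitting $c_{k}$ into a part symmetric in $\alpha,\beta$ and a part antisymmetric in $\alpha,\beta$. Every antisymmetric piece factors through $\alpha-\beta$ once one uses $\beta^{2}-\alpha^{2}=-p(\alpha-\beta)$ and $\alpha^{2}\beta^{2}=q^{2}$. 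I expect to arrive at $c_{0}=1+q-pq-q^{3}$, $c_{1}=p+pq(\alpha-\beta)$, $c_{2}=(p^{2}+2q)+(pq+q^{2})(\alpha-\beta)$ and $c_{3}=(p^{3}+3pq)-q(\alpha-\beta)$.

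It then remains to read these four coefficients off in the stated closed form. The symmetric parts $2,\,p,\,p^{2}+2q,\,p^{3}+3pq$ are exactly the components $\mathcal{L}_{0},\mathcal{L}_{1},\mathcal{L}_{2},\mathcal{L}_{3}$ of $\mathbb{H}\mathcal{L}_{0}$, so the symmetric contribution equals $\mathbb{H}\mathcal{L}_{0}-(q^{3}+pq-q+1)$ after subtracting from the real part the surplus $2-(1+q-pq-q^{3})=q^{3}+pq-q+1$. For the antisymmetric contribution I would use $\mathbb{H}\mathcal{F}_{0}=\textbf{i}+p\bm{\varepsilon}+(p^{2}+q)\textbf{h}$ and the definition of $\bm{\omega}$ to get $\mathbb{H}\mathcal{F}_{0}-\bm{\omega}=p\textbf{i}+(p+q)\bm{\varepsilon}-\textbf{h}$; multiplying by $q(\alpha-\beta)$ produces precisely the coefficients $pq(\alpha-\beta)$, $(pq+q^{2})(\alpha-\beta)$, $-q(\alpha-\beta)$ of $\textbf{i},\bm{\varepsilon},\textbf{h}$ found above. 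This establishes (\ref{eq:1}).

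Finally, (\ref{eq:2}) follows with no further computation: interchanging $\alpha$ and $\beta$ turns $\underline{\bm{\alpha}}\underline{\bm{\beta}}$ into $\underline{\bm{\beta}}\underline{\bm{\alpha}}$, fixes every symmetric quantity and replaces $\alpha-\beta$ by $-(\alpha-\beta)$, so that only the sign of the antisymmetric term changes. The single real obstacle is the sheer volume of the noncommutative sixteen-term reduction, where a dropped sign in $\textbf{i}\bm{\varepsilon}$ versus $\bm{\varepsilon}\textbf{i}$ or in $\textbf{i}\textbf{h}$ versus $\textbf{h}\textbf{i}$ would corrupt exactly the antisymmetric part; organizing the computation by the symmetric/antisymmetric split is what keeps it tractable and makes the final identification with $\mathbb{H}\mathcal{L}_{0}$, $\mathbb{H}\mathcal{F}_{0}$ and $\bm{\omega}$ transparent.
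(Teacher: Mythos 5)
Your proposal is correct and follows essentially the same route as the paper: expand the sixteen-term hybridian product via Table \ref{table:1}, reduce the coefficients with $\alpha+\beta=p$, $\alpha\beta=-q$, and recognize the symmetric part as $\mathbb{H}\mathcal{L}_{0}-(q^{3}+pq-q+1)$ and the antisymmetric part as $q(\alpha-\beta)(p\textbf{i}+(p+q)\bm{\varepsilon}-\textbf{h})=q(\alpha-\beta)(\mathbb{H}\mathcal{F}_{0}-\bm{\omega})$; your intermediate coefficients $c_{0},\dots,c_{3}$ all check out. The only (harmless) refinement is that you obtain (\ref{eq:2}) by the $\alpha\leftrightarrow\beta$ swap rather than by redoing the computation, which the paper merely asserts can be done ``similarly.''
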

\begin{proof}
From the definitions of $\underline{\bm{\alpha}}$ and $\underline{\bm{\beta}}$, and using $\textbf{i}^{2}=-1,\ \bm{\varepsilon}^{2}=0,\ \textbf{h}^{2}=1$ and $ \textbf{ih}=-\textbf{hi}=\bm{\varepsilon}+\textbf{i}$ in Table \ref{table:1}, we have
\begin{align*}
\underline{\bm{\alpha}}\underline{\bm{\beta}}&=(1+\alpha \textbf{i}+\alpha^{2} \bm{\varepsilon}+\alpha^{3} \textbf{h})(1+\beta \textbf{i}+\beta^{2} \bm{\varepsilon}+\beta^{3} \textbf{h})\\
&=2+(\alpha+\beta)\textbf{i}+(\alpha^{2}+\beta^{2})\bm{\varepsilon}+(\alpha^{3}+\beta^{3})\textbf{h}-1+\alpha\beta(-1+\alpha+\beta+\alpha^{2}\beta^{2})\\
&\ \ -\alpha\beta (\alpha^{2}-\beta^{2})\textbf{i}-\alpha\beta (\alpha^{2}-\beta^{2}-\alpha^{2}\beta+\alpha\beta^{2})\bm{\varepsilon}+\alpha\beta(\alpha-\beta)\textbf{h}\\
&=2+p\textbf{i}+(p^{2}+2q)\bm{\varepsilon}+(p^{3}+3pq)\textbf{h}-(q^{3}+pq-q+1)\\
&\ \ +q(\alpha-\beta)(p\textbf{i}+(p+q)\bm{\varepsilon}-\textbf{h})\\
&=\mathbb{H}\mathcal{L}_{0}-(q^{3}+pq-q+1)+q(\alpha-\beta)(p\textbf{i}+(p+q)\bm{\varepsilon}-\textbf{h})\\
&=\mathbb{H}\mathcal{L}_{0}-(q^{3}+pq-q+1)+q(\alpha-\beta)(\mathbb{H}\mathcal{F}_{0}-\bm{\omega}),
\end{align*}
where $\bm{\omega}=(1-p)\textbf{i}-q\bm{\varepsilon}+(p^{2}+q+1)\textbf{h}$ and the final equation gives Eq. (\ref{eq:1}). The other identity can be computed similarly.
\end{proof}

This lemma gives us the following useful identity:
\begin{equation}\label{eq:3}
\underline{\bm{\alpha}}\underline{\bm{\beta}}+\underline{\bm{\beta}}\underline{\bm{\alpha}}=2(\mathbb{H}\mathcal{L}_{0}-(q^{3}+pq-q+1)).
\end{equation}

Catalan's identities for generalized Fibonacci quaternions are given in the next theorem.
\begin{theorem}\label{teO}
For any integers $m\geq r \geq 0$, we have
\begin{equation}\label{eq:4}
\mathbb{H}\mathcal{J}_{m}^{2}-\mathbb{H}\mathcal{J}_{m+r}\mathbb{H}\mathcal{J}_{m-r}=-AB(-q)^{m}\mathcal{F}_{-r}\left\lbrace \begin{array}{c} (\mathbb{H}\mathcal{L}_{0}-(q^{3}+pq-q+1))\mathcal{F}_{r}\\
+q(\mathbb{H}\mathcal{F}_{0}-\bm{\omega})\mathcal{L}_{r}\end{array}\right\rbrace,
\end{equation}
where $A=b-a\beta$, $B=b-a\alpha$, $\bm{\omega}=(1-p)\textbf{i}-q\bm{\varepsilon}+(p^{2}+q+1)\textbf{h}$ and $\mathcal{F}_{r}$, $\mathcal{L}_{r}$ are the $r$-th $(p,q)$-Fibonacci and $(p,q)$-Lucas numbers, respectively.
\end{theorem}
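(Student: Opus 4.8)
The plan is to substitute the Binet formula (\ref{binH}) into each term and to use the two products $\underline{\bm{\alpha}}\underline{\bm{\beta}}$ and $\underline{\bm{\beta}}\underline{\bm{\alpha}}$ supplied by the Lemma, being careful at every stage that the hybridian product is non-commutative, so the order of $\underline{\bm{\alpha}}$ and $\underline{\bm{\beta}}$ may never be switched. Writing $D=\alpha-\beta$ and expanding
\[
\mathbb{H}\mathcal{J}_{m}^{2}=\frac{1}{D^{2}}\big(A\underline{\bm{\alpha}}\alpha^{m}-B\underline{\bm{\beta}}\beta^{m}\big)^{2},
\]
\[
\mathbb{H}\mathcal{J}_{m+r}\mathbb{H}\mathcal{J}_{m-r}=\frac{1}{D^{2}}\big(A\underline{\bm{\alpha}}\alpha^{m+r}-B\underline{\bm{\beta}}\beta^{m+r}\big)\big(A\underline{\bm{\alpha}}\alpha^{m-r}-B\underline{\bm{\beta}}\beta^{m-r}\big),
\]
I observe that in the difference the two ``diagonal'' contributions $A^{2}\underline{\bm{\alpha}}^{2}\alpha^{2m}$ and $B^{2}\underline{\bm{\beta}}^{2}\beta^{2m}$ coincide and cancel, so that only the mixed terms, which carry exactly the factors $\underline{\bm{\alpha}}\underline{\bm{\beta}}$ and $\underline{\bm{\beta}}\underline{\bm{\alpha}}$, survive.

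After collecting these mixed terms and pulling out $-AB/D^{2}$, I would factor the scalar coefficients using $\alpha\beta=-q$; concretely $(\alpha\beta)^{m}-\alpha^{m+r}\beta^{m-r}=(-q)^{m-r}\alpha^{r}(\beta^{r}-\alpha^{r})$ and $(\alpha\beta)^{m}-\alpha^{m-r}\beta^{m+r}=(-q)^{m-r}\beta^{r}(\alpha^{r}-\beta^{r})$. Using $\alpha^{r}-\beta^{r}=D\mathcal{F}_{r}$ this collapses the difference to
\[
\mathbb{H}\mathcal{J}_{m}^{2}-\mathbb{H}\mathcal{J}_{m+r}\mathbb{H}\mathcal{J}_{m-r}=\frac{-AB(-q)^{m-r}\mathcal{F}_{r}}{D}\big(\underline{\bm{\beta}}\underline{\bm{\alpha}}\,\beta^{r}-\underline{\bm{\alpha}}\underline{\bm{\beta}}\,\alpha^{r}\big).
\]

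Next I would insert the Lemma. Since (\ref{eq:1}) and (\ref{eq:2}) give $\underline{\bm{\alpha}}\underline{\bm{\beta}}=S+T$ and $\underline{\bm{\beta}}\underline{\bm{\alpha}}=S-T$ with $S=\mathbb{H}\mathcal{L}_{0}-(q^{3}+pq-q+1)$ and $T=q D(\mathbb{H}\mathcal{F}_{0}-\bm{\omega})$, the bracket becomes $S(\beta^{r}-\alpha^{r})-T(\alpha^{r}+\beta^{r})=-D\big(S\mathcal{F}_{r}+q(\mathbb{H}\mathcal{F}_{0}-\bm{\omega})\mathcal{L}_{r}\big)$, where I use $\beta^{r}-\alpha^{r}=-D\mathcal{F}_{r}$ and $\alpha^{r}+\beta^{r}=\mathcal{L}_{r}$. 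Thus the symmetric part $S$ pairs with $\mathcal{F}_{r}$ and the antisymmetric part $T$ with $\mathcal{L}_{r}$, which is precisely the grouping appearing in the claim. Finally, the reflection identity $\mathcal{F}_{-r}=-\mathcal{F}_{r}/(-q)^{r}$ for the $(p,q)$-Fibonacci numbers turns $(-q)^{m-r}\mathcal{F}_{r}$ into $-(-q)^{m}\mathcal{F}_{-r}$, and after the cancellation of the factor $D$ the identity (\ref{eq:4}) falls out.

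The only genuinely delicate point is the non-commutative bookkeeping in the first step: one must verify that the surviving cross terms really do line up as the unmixed products $\underline{\bm{\alpha}}\underline{\bm{\beta}}$ and $\underline{\bm{\beta}}\underline{\bm{\alpha}}$ in the correct order (and not some scrambled arrangement), so that the Lemma applies verbatim. Once that alignment is confirmed, everything else is the scalar algebra of $\mathcal{F}_{r}$, $\mathcal{L}_{r}$, and powers of $-q$, with the symmetric/antisymmetric split of the Lemma doing the real work.
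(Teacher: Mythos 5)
Your proposal is correct and follows essentially the same route as the paper: Binet's formula, cancellation of the diagonal terms $A^{2}\underline{\bm{\alpha}}^{2}\alpha^{2m}$ and $B^{2}\underline{\bm{\beta}}^{2}\beta^{2m}$, the Lemma's symmetric/antisymmetric split of $\underline{\bm{\alpha}}\underline{\bm{\beta}}$ and $\underline{\bm{\beta}}\underline{\bm{\alpha}}$, and the reflection identity $\mathcal{F}_{-r}=-(-q)^{-r}\mathcal{F}_{r}$. The only cosmetic difference is that you factor $\alpha^{r}-\beta^{r}=(\alpha-\beta)\mathcal{F}_{r}$ out of the cross terms immediately, whereas the paper keeps $\alpha^{2r}$ and $\beta^{2r}$ and later invokes $(p^{2}+4q)\mathcal{F}_{r}^{2}=\mathcal{L}_{2r}-2(-q)^{r}$ and $\mathcal{F}_{2r}=\mathcal{F}_{r}\mathcal{L}_{r}$ to arrive at the same expression.
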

\begin{proof}
From the Binet formula for generalized Fibonacci quaternions $\mathbb{H}\mathcal{J}_{m}$ in (\ref{binH}) and $(\alpha-\beta)^{2}=p^{2}+4q$, we have
\begin{align*}
(p^{2}+4q)&\left(\mathbb{H}\mathcal{J}_{m}^{2}-\mathbb{H}\mathcal{J}_{m+r}\mathbb{H}\mathcal{J}_{m-r}\right)\\
&=\left(A\underline{\bm{\alpha}}\alpha^{m}-B\underline{\bm{\beta}}\beta^{m}\right)^{2}- \left(A\underline{\bm{\alpha}}\alpha^{m+r}-B\underline{\bm{\beta}}\beta^{m+r}\right)\left(A\underline{\bm{\alpha}}\alpha^{m-r}-B\underline{\bm{\beta}}\beta^{m-r}\right)\\
&=AB(-q)^{m-r}\big(\underline{\bm{\alpha}}\underline{\bm{\beta}}\alpha^{2r}+\underline{\bm{\beta}}\underline{\bm{\alpha}}\beta^{2r}-(-q)^{r}\left(\underline{\bm{\alpha}}\underline{\bm{\beta}}+\underline{\bm{\beta}}\underline{\bm{\alpha}}\right)\big).
\end{align*}
We require Eqs. (\ref{eq:1}) and (\ref{eq:2}). Using this equations, we obtain
\begin{align*}
\mathbb{H}\mathcal{J}_{m}^{2}&-\mathbb{H}\mathcal{J}_{m+r}\mathbb{H}\mathcal{J}_{m-r}\\
&=\frac{AB(-q)^{m-r}}{p^{2}+4q}\left\lbrace\begin{array}{c} (\mathbb{H}\mathcal{L}_{0}-(q^{3}+pq-q+1))(\alpha^{2r}+\beta^{2r}-2(-q)^{r})\\
+q(\alpha-\beta)(\mathbb{H}\mathcal{F}_{0}-\bm{\omega})(\alpha^{2r}-\beta^{2r})\end{array}\right\rbrace\\
&=\frac{AB(-q)^{m-r}}{p^{2}+4q}\left\lbrace \begin{array}{c} (\mathbb{H}\mathcal{L}_{0}-(q^{3}+pq-q+1))(\mathcal{L}_{2r}-2(-q)^{r})\\
+q(p^{2}+4q)(\mathbb{H}\mathcal{F}_{0}-\bm{\omega})\mathcal{F}_{2r}\end{array}\right\rbrace.
\end{align*}
Using the identity $(p^{2}+4q)\mathcal{F}_{r}^{2}=\mathcal{L}_{2r}-2(-q)^{r}$ gives
$$\mathbb{H}\mathcal{J}_{m}^{2}-\mathbb{H}\mathcal{J}_{m+r}\mathbb{H}\mathcal{J}_{m-r}=AB(-q)^{m-r}\left\lbrace \begin{array}{c} (\mathbb{H}\mathcal{L}_{0}-(q^{3}+pq-q+1))\mathcal{F}_{r}^{2}\\
+q(\mathbb{H}\mathcal{F}_{0}-\bm{\omega})\mathcal{F}_{2r}\end{array}\right\rbrace,$$
where $\mathcal{L}_{r}$, $\mathcal{F}_{r}$ are the $r$-th $(p,q)$-Lucas and $(p,q)$-Fibonacci numbers, respectively. With the help of the identities $\mathcal{F}_{2r}=\mathcal{F}_{r}\mathcal{L}_{r}$ and $\mathcal{F}_{-r}=-(-q)^{-r}\mathcal{F}_{r}$, we have Eq. (\ref{eq:4}). The proof is completed.
\end{proof}

Taking $r=1$ in the Theorem \ref{teO} and using the identity $\mathcal{F}_{-1}=\frac{1}{q}$, we obtain Cassini's identities for generalized Fibonacci quaternions.
\begin{corollary}
For any integer $m$, we have
\begin{equation}\label{eq:5}
\mathbb{H}\mathcal{J}_{m}^{2}-\mathbb{H}\mathcal{J}_{m+1}\mathbb{H}\mathcal{J}_{m-1}=AB(-q)^{m-1}\left\lbrace \begin{array}{c} (\mathbb{H}\mathcal{L}_{0}-(q^{3}+pq-q+1))\\
+pq(\mathbb{H}\mathcal{F}_{0}-\bm{\omega})\end{array} \right\rbrace,
\end{equation}
where $A=b-a\beta$, $B=b-a\alpha$ and $\bm{\omega}=(1-p)\textbf{i}-q\bm{\varepsilon}+(p^{2}+q+1)\textbf{h}$.
\end{corollary}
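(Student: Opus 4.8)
The plan is to obtain this corollary as the special case $r=1$ of Catalan's identity in Theorem~\ref{teO}, so no new machinery is needed beyond (\ref{eq:4}). First I would recall that identity,
\begin{equation*}
\mathbb{H}\mathcal{J}_{m}^{2}-\mathbb{H}\mathcal{J}_{m+r}\mathbb{H}\mathcal{J}_{m-r}=-AB(-q)^{m}\mathcal{F}_{-r}\left\lbrace (\mathbb{H}\mathcal{L}_{0}-(q^{3}+pq-q+1))\mathcal{F}_{r}+q(\mathbb{H}\mathcal{F}_{0}-\bm{\omega})\mathcal{L}_{r}\right\rbrace,
\end{equation*}
and then specialize it at $r=1$. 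The only auxiliary data required are the initial values of the underlying $(p,q)$-Fibonacci and $(p,q)$-Lucas sequences, namely $\mathcal{F}_{1}=1$ and $\mathcal{L}_{1}=p$, together with the relation $\mathcal{F}_{-1}=\frac{1}{q}$ quoted just before the statement.

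Substituting $\mathcal{F}_{1}=1$ and $\mathcal{L}_{1}=p$ into the brace collapses the bracketed factor to $(\mathbb{H}\mathcal{L}_{0}-(q^{3}+pq-q+1))+pq(\mathbb{H}\mathcal{F}_{0}-\bm{\omega})$, which is precisely the hybrid-number factor appearing on the right-hand side of (\ref{eq:5}). Thus the entire content of the proof reduces to reconciling the scalar prefactor $-AB(-q)^{m}\mathcal{F}_{-1}$ with the claimed $AB(-q)^{m-1}$.

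The one step warranting care is this sign-and-power bookkeeping. Writing $\mathcal{F}_{-1}=\frac{1}{q}$ and $(-q)^{m}=(-1)^{m}q^{m}$, I would compute
\begin{equation*}
-(-q)^{m}\mathcal{F}_{-1}=-\frac{(-1)^{m}q^{m}}{q}=(-1)^{m+1}q^{m-1}=(-q)^{m-1},
\end{equation*}
so the prefactor simplifies exactly to $AB(-q)^{m-1}$ and the identification with (\ref{eq:5}) is complete. I do not anticipate any genuine obstacle, since the corollary is a pure specialization of the already-established Theorem~\ref{teO}; the only pitfall is mismanaging the parity, and here $(-1)^{m+1}$ and $(-1)^{m-1}$ agree because they differ by $(-1)^{2}=1$. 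With the prefactor and the brace both matched, the proof concludes.
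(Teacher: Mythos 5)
Your proposal is correct and matches the paper's own derivation, which likewise obtains the corollary by setting $r=1$ in Theorem~\ref{teO} and invoking $\mathcal{F}_{-1}=\frac{1}{q}$ (together with $\mathcal{F}_{1}=1$, $\mathcal{L}_{1}=p$). The sign-and-power computation $-(-q)^{m}\mathcal{F}_{-1}=(-q)^{m-1}$ is exactly the bookkeeping the paper leaves implicit, and you have carried it out correctly.
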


The following theorem gives d'Ocagne's identities for generalized hybrid Fibonacci numbers.
\begin{theorem}\label{teoD}
For any integers $r$ and $m$, we have
\begin{equation}\label{eq:6}
\mathbb{H}\mathcal{J}_{r}\mathbb{H}\mathcal{J}_{m+1}-\mathbb{H}\mathcal{J}_{r+1}\mathbb{H}\mathcal{J}_{m}=(-q)^{m}AB \left\lbrace \begin{array}{c} (\mathbb{H}\mathcal{L}_{0}-(q^{3}+pq-q+1))\mathcal{F}_{r-m}\\
+q(\mathbb{H}\mathcal{F}_{0}-\bm{\omega}) \mathcal{L}_{r-m}  \end{array} \right\rbrace
\end{equation}
$\mathcal{F}_{r}$, $\mathcal{L}_{r}$ are the $r$-th $(p,q)$-Fibonacci and $(p,q)$-Lucas numbers, respectively.
\end{theorem}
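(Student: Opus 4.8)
The plan is to mimic the proof of Catalan's identity (Theorem \ref{teO}): substitute the Binet formula (\ref{binH}) for each of the four factors, multiply the target combination through by $(\alpha-\beta)^{2}=p^{2}+4q$, and reduce everything to the non-commutative products $\underline{\bm{\alpha}}\underline{\bm{\beta}}$ and $\underline{\bm{\beta}}\underline{\bm{\alpha}}$ supplied by the Lemma. First I would write each of $\mathbb{H}\mathcal{J}_{r}$, $\mathbb{H}\mathcal{J}_{m+1}$, $\mathbb{H}\mathcal{J}_{r+1}$, $\mathbb{H}\mathcal{J}_{m}$ in the form $\frac{A\underline{\bm{\alpha}}\alpha^{n}-B\underline{\bm{\beta}}\beta^{n}}{\alpha-\beta}$, obtaining
\begin{align*}
(p^{2}+4q)&\left(\mathbb{H}\mathcal{J}_{r}\mathbb{H}\mathcal{J}_{m+1}-\mathbb{H}\mathcal{J}_{r+1}\mathbb{H}\mathcal{J}_{m}\right)\\
&=\left(A\underline{\bm{\alpha}}\alpha^{r}-B\underline{\bm{\beta}}\beta^{r}\right)\left(A\underline{\bm{\alpha}}\alpha^{m+1}-B\underline{\bm{\beta}}\beta^{m+1}\right)\\
&\quad-\left(A\underline{\bm{\alpha}}\alpha^{r+1}-B\underline{\bm{\beta}}\beta^{r+1}\right)\left(A\underline{\bm{\alpha}}\alpha^{m}-B\underline{\bm{\beta}}\beta^{m}\right).
\end{align*}

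When I expand both products, the \emph{diagonal} terms $A^{2}\underline{\bm{\alpha}}^{2}\alpha^{r+m+1}$ and $B^{2}\underline{\bm{\beta}}^{2}\beta^{r+m+1}$ are identical in the two products and cancel, leaving only the mixed terms carrying $\underline{\bm{\alpha}}\underline{\bm{\beta}}$ and $\underline{\bm{\beta}}\underline{\bm{\alpha}}$. Collecting these and using $\alpha\beta=-q$ together with $\alpha^{r}\beta^{m}=(-q)^{m}\alpha^{r-m}$ and $\beta^{r}\alpha^{m}=(-q)^{m}\beta^{r-m}$, I expect the right-hand side to become
$$AB(\alpha-\beta)(-q)^{m}\left(\underline{\bm{\alpha}}\underline{\bm{\beta}}\,\alpha^{r-m}-\underline{\bm{\beta}}\underline{\bm{\alpha}}\,\beta^{r-m}\right),$$
so that a single factor $(-q)^{m}$ is pulled out and the surviving exponents are shifted uniformly to $r-m$.

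Next I would insert the Lemma's expressions (\ref{eq:1}) and (\ref{eq:2}), writing $\underline{\bm{\alpha}}\underline{\bm{\beta}}=P+q(\alpha-\beta)Q$ and $\underline{\bm{\beta}}\underline{\bm{\alpha}}=P-q(\alpha-\beta)Q$ with $P=\mathbb{H}\mathcal{L}_{0}-(q^{3}+pq-q+1)$ and $Q=\mathbb{H}\mathcal{F}_{0}-\bm{\omega}$. Since $P$, $Q$ are hybrid numbers while $q$, $\alpha-\beta$, and the powers of $\alpha,\beta$ are real scalars, the bracket splits cleanly as $P(\alpha^{r-m}-\beta^{r-m})+q(\alpha-\beta)Q(\alpha^{r-m}+\beta^{r-m})$. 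The two Binet-type relations $\alpha^{r-m}-\beta^{r-m}=(\alpha-\beta)\mathcal{F}_{r-m}$ and $\alpha^{r-m}+\beta^{r-m}=\mathcal{L}_{r-m}$ then convert these into $(p,q)$-Fibonacci and $(p,q)$-Lucas numbers, producing an overall factor $(\alpha-\beta)^{2}$ which cancels the $p^{2}+4q$ on the left. The identity (\ref{eq:6}) falls out directly.

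The one point to watch is the non-commutativity of $\underline{\bm{\alpha}}$ and $\underline{\bm{\beta}}$: because $\underline{\bm{\alpha}}\underline{\bm{\beta}}\neq\underline{\bm{\beta}}\underline{\bm{\alpha}}$, the two cross terms cannot be collapsed into a single symmetric product, and the left-to-right order of every factor must be preserved throughout the expansion. It is precisely this asymmetry — encoded in the two separate formulas (\ref{eq:1}) and (\ref{eq:2}) — that simultaneously generates the $\mathcal{F}_{r-m}$ contribution (from the difference $P(\alpha^{r-m}-\beta^{r-m})$) and the $\mathcal{L}_{r-m}$ contribution (from the sum multiplying $q(\alpha-\beta)Q$). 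Compared with Catalan's identity, the computation is actually lighter, since the exponent difference $r-m$ delivers $\mathcal{F}_{r-m}$ and $\mathcal{L}_{r-m}$ immediately, and none of the auxiliary identities used there — such as $(p^{2}+4q)\mathcal{F}_{r}^{2}=\mathcal{L}_{2r}-2(-q)^{r}$ or the conversions $\mathcal{F}_{2r}=\mathcal{F}_{r}\mathcal{L}_{r}$ and $\mathcal{F}_{-r}=-(-q)^{-r}\mathcal{F}_{r}$ — are required.
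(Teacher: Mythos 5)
Your proposal is correct and follows essentially the same route as the paper: substitute the Binet formula, multiply by $(p^{2}+4q)$, observe that the $A^{2}\underline{\bm{\alpha}}^{2}$ and $B^{2}\underline{\bm{\beta}}^{2}$ terms cancel so that only the cross terms $(-q)^{m}AB(\alpha-\beta)\left(\underline{\bm{\alpha}}\underline{\bm{\beta}}\alpha^{r-m}-\underline{\bm{\beta}}\underline{\bm{\alpha}}\beta^{r-m}\right)$ survive, then insert the lemma's expressions for $\underline{\bm{\alpha}}\underline{\bm{\beta}}$ and $\underline{\bm{\beta}}\underline{\bm{\alpha}}$ and convert via $\mathcal{F}_{r-m}$ and $\mathcal{L}_{r-m}$. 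Your remarks on preserving the factor order and on the absence of any auxiliary identities are accurate and, if anything, make the cancellation step more explicit than the paper does.
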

\begin{proof}
Using the Binet formula for the generalized hybrid Fibonacci numbers gives
\begin{align*}
(p^{2}+4q)(\mathbb{H}\mathcal{J}_{r}\mathbb{H}\mathcal{J}_{m+1}&-\mathbb{H}\mathcal{J}_{r+1}\mathbb{H}\mathcal{J}_{m})\\
&=\left(A\underline{\bm{\alpha}}\alpha^{r}-B\underline{\bm{\beta}}\beta^{r}\right)\left(A\underline{\bm{\alpha}}\alpha^{m+1}-B\underline{\bm{\beta}}\beta^{m+1}\right)\\
&\ \ - \left(A\underline{\bm{\alpha}}\alpha^{r+1}-B\underline{\bm{\beta}}\beta^{r+1}\right)\left(A\underline{\bm{\alpha}}\alpha^{m}-B\underline{\bm{\beta}}\beta^{m}\right)\\
&= (-q)^{m} AB(\alpha-\beta) \left(\underline{\bm{\alpha}}\underline{\bm{\beta}}\alpha^{r-m}- \underline{\bm{\beta}}\underline{\bm{\alpha}}\beta^{r-m}\right).
\end{align*}
We require the Eqs. (\ref{eq:1}) and (\ref{eq:2}). Substituting these into the previous equation, we have
\begin{align*}
&\mathbb{H}\mathcal{J}_{r}\mathbb{H}\mathcal{J}_{m+1}-\mathbb{H}\mathcal{J}_{r+1}\mathbb{H}\mathcal{J}_{m}\\
&=\frac{(-q)^{m}}{\alpha-\beta} AB \left\lbrace \begin{array}{c} (\mathbb{H}\mathcal{L}_{0}-(q^{3}+pq-q+1))(\alpha^{r-m}-\beta^{r-m})\\
+q(\alpha-\beta)(\mathbb{H}\mathcal{F}_{0}-\bm{\omega})(\alpha^{r-m}+\beta^{r-m}) \end{array} \right\rbrace \\
&=(-q)^{m} AB \left((\mathbb{H}\mathcal{L}_{0}-(q^{3}+pq-q+1))\mathcal{F}_{r-m}+q(\mathbb{H}\mathcal{F}_{0}-\bm{\omega}) \mathcal{L}_{r-m} \right).
\end{align*}
The second identity in the above equality, can be proved using $\mathcal{L}_{r-m}=\alpha^{r-m}+\beta^{r-m}$ and $\mathcal{F}_{r-m}=\frac{\alpha^{r-m}-\beta^{r-m}}{\alpha-\beta}$. This proof is completed.
\end{proof}

In particular, if $m=r-1$ in this theorem and using the identity $\mathcal{L}_{1}=p$, we obtain Cassini's identities for generalized hybrid Fibonacci numbers. Now, taking $m=r$ in the Theorem \ref{teoD} and using the identities $\mathcal{F}_{0}=0$ and $\mathcal{L}_{0}=2$, we obtain the next identity.
\begin{corollary}
For any integer $r\geq 0$, we have
\begin{equation}\label{eq:7}
\mathbb{H}\mathcal{J}_{r+1}\mathbb{H}\mathcal{J}_{r}-\mathbb{H}\mathcal{J}_{r}\mathbb{H}\mathcal{J}_{r+1}=2(-q)^{r+1} AB (\mathbb{H}\mathcal{F}_{0}-\bm{\omega}),
\end{equation}
where $A=b-a\beta$, $B=b-a\alpha$ and $\bm{\omega}=(1-p)\textbf{i}-q\bm{\varepsilon}+(p^{2}+q+1)\textbf{h}$.
\end{corollary}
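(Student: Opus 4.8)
The plan is to obtain this corollary as a direct specialization of the d'Ocagne identity in Theorem~\ref{teoD}, rather than re-deriving anything from the Binet formula. First I would set $m=r$ in the identity \eqref{eq:6}. On the left this produces $\mathbb{H}\mathcal{J}_{r}\mathbb{H}\mathcal{J}_{r+1}-\mathbb{H}\mathcal{J}_{r+1}\mathbb{H}\mathcal{J}_{r}$, and on the right the Fibonacci and Lucas coefficients collapse to $\mathcal{F}_{r-m}=\mathcal{F}_{0}$ and $\mathcal{L}_{r-m}=\mathcal{L}_{0}$, while the prefactor $(-q)^{m}$ becomes $(-q)^{r}$.

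Next I would insert the initial values $\mathcal{F}_{0}=0$ and $\mathcal{L}_{0}=2$. The term carrying $\mathbb{H}\mathcal{L}_{0}-(q^{3}+pq-q+1)$ is multiplied by $\mathcal{F}_{0}=0$ and therefore drops out entirely, whereas the term carrying $q(\mathbb{H}\mathcal{F}_{0}-\bm{\omega})$ acquires the factor $\mathcal{L}_{0}=2$. This leaves the intermediate relation $\mathbb{H}\mathcal{J}_{r}\mathbb{H}\mathcal{J}_{r+1}-\mathbb{H}\mathcal{J}_{r+1}\mathbb{H}\mathcal{J}_{r}=2q(-q)^{r}AB(\mathbb{H}\mathcal{F}_{0}-\bm{\omega})$.

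Finally I would reconcile this with the stated form. The commutator in \eqref{eq:7} is written in the opposite order, so I would multiply both sides by $-1$: the left side then becomes exactly $\mathbb{H}\mathcal{J}_{r+1}\mathbb{H}\mathcal{J}_{r}-\mathbb{H}\mathcal{J}_{r}\mathbb{H}\mathcal{J}_{r+1}$, and on the right the introduced sign is absorbed via $-q\cdot(-q)^{r}=(-q)^{r+1}$, converting the coefficient $2q(-q)^{r}$ into $2(-q)^{r+1}$. This yields precisely \eqref{eq:7}.

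Since the genuine computation is already discharged inside Theorem~\ref{teoD}, there is no analytic difficulty here. The only point demanding care is the bookkeeping forced by non-commutativity: one must notice that the left-hand side of \eqref{eq:7} is the \emph{reversed} commutator of the expression appearing in \eqref{eq:6} at $m=r$, and that the sign picked up under this reversal is exactly what turns $q(-q)^{r}$ into $(-q)^{r+1}$. Keeping the order of the two factors and this sign consistent is the step most likely to trip one up; otherwise the corollary is a one-line specialization.
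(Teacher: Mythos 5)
Your proposal is correct and follows exactly the route the paper intends: the paper states the corollary by "taking $m=r$ in Theorem \ref{teoD} and using $\mathcal{F}_{0}=0$ and $\mathcal{L}_{0}=2$," which is precisely your specialization, and your explicit handling of the reversed commutator and the sign absorption $-q\cdot(-q)^{r}=(-q)^{r+1}$ is the bookkeeping the paper leaves implicit.
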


After deriving these three famous identities, we present some other identities for the hybrid $(p,q)$-Fibonacci and hybrid $(p,q)$-Lucas numbers.
\begin{theorem}
For any integers $n$, $r$ and $s$, we have
\begin{equation}\label{equa:7}
\mathbb{H}\mathcal{L}_{n+r}\mathbb{H}\mathcal{F}_{n+s}-\mathbb{H}\mathcal{L}_{n+s}\mathbb{H}\mathcal{F}_{n+r}=2(-q)^{n+r}\mathcal{F}_{s-r}(\mathbb{H}\mathcal{L}_{0}-(q^{3}+pq-q+1)).
\end{equation}
\end{theorem}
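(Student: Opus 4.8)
The plan is to run the same Binet-formula computation that produced Catalan's and d'Ocagne's identities, now applied to the mixed Lucas--Fibonacci product. First I would specialize the general Binet formula (\ref{binH}) to the two sequences in the statement. Setting $a=0$, $b=1$ gives $A=B=1$ and hence
\[
\mathbb{H}\mathcal{F}_{k}=\frac{\underline{\bm{\alpha}}\alpha^{k}-\underline{\bm{\beta}}\beta^{k}}{\alpha-\beta},
\]
while $a=2$, $b=p$ gives $A=p-2\beta=\alpha-\beta$ and $B=p-2\alpha=-(\alpha-\beta)$ (using $\alpha+\beta=p$), so that $\mathbb{H}\mathcal{L}_{k}=\underline{\bm{\alpha}}\alpha^{k}+\underline{\bm{\beta}}\beta^{k}$. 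These two closed forms are the only inputs specific to this theorem.

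Next I would multiply out $(\alpha-\beta)\,\mathbb{H}\mathcal{L}_{n+r}\mathbb{H}\mathcal{F}_{n+s}$ and $(\alpha-\beta)\,\mathbb{H}\mathcal{L}_{n+s}\mathbb{H}\mathcal{F}_{n+r}$ directly from these expansions, treating $\alpha,\beta$ as scalars (which commute with the hybrid units) but keeping the order of the genuinely non-commuting products $\underline{\bm{\alpha}}\,\underline{\bm{\beta}}$ and $\underline{\bm{\beta}}\,\underline{\bm{\alpha}}$ intact. Upon subtracting, the two diagonal terms carrying $\underline{\bm{\alpha}}^{2}$ and $\underline{\bm{\beta}}^{2}$ cancel, leaving only the mixed terms. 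The crucial observation---and the one point that has to be checked carefully---is that the coefficient of $\underline{\bm{\alpha}}\,\underline{\bm{\beta}}$ and the coefficient of $\underline{\bm{\beta}}\,\underline{\bm{\alpha}}$ come out \emph{equal}, both being $(\alpha\beta)^{n}\bigl(\alpha^{s}\beta^{r}-\alpha^{r}\beta^{s}\bigr)$.

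This coincidence is what makes the non-commutativity harmless: the surviving combination is exactly the symmetric sum $\underline{\bm{\alpha}}\,\underline{\bm{\beta}}+\underline{\bm{\beta}}\,\underline{\bm{\alpha}}$, which is evaluated once and for all in (\ref{eq:3}). I would then simplify the scalar coefficient by writing $\alpha\beta=-q$ and factoring
\[
\alpha^{s}\beta^{r}-\alpha^{r}\beta^{s}=(\alpha\beta)^{r}\bigl(\alpha^{s-r}-\beta^{s-r}\bigr)=(-q)^{r}(\alpha-\beta)\mathcal{F}_{s-r},
\]
using the Binet formula $\mathcal{F}_{s-r}=(\alpha^{s-r}-\beta^{s-r})/(\alpha-\beta)$. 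Assembling the pieces shows that $(\alpha-\beta)$ times the desired difference equals $(-q)^{n+r}(\alpha-\beta)\mathcal{F}_{s-r}\bigl(\underline{\bm{\alpha}}\,\underline{\bm{\beta}}+\underline{\bm{\beta}}\,\underline{\bm{\alpha}}\bigr)$; cancelling $\alpha-\beta$ and inserting (\ref{eq:3}) supplies the factor $2\bigl(\mathbb{H}\mathcal{L}_{0}-(q^{3}+pq-q+1)\bigr)$ and hence (\ref{equa:7}).

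The main obstacle is purely bookkeeping rather than conceptual: one must preserve the order of $\underline{\bm{\alpha}}$ and $\underline{\bm{\beta}}$ throughout the expansion so that the two mixed coefficients are correctly identified as equal, rather than prematurely merging $\underline{\bm{\alpha}}\,\underline{\bm{\beta}}$ and $\underline{\bm{\beta}}\,\underline{\bm{\alpha}}$ into a single symbol. Because only the symmetric sum appears, the antisymmetric part (which would require the separate Lemma expressions (\ref{eq:1}) and (\ref{eq:2})) never enters, and the computation terminates quickly once the coefficient equality is verified.
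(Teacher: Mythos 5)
Your proposal is correct and follows essentially the same route as the paper: expand both products via the Binet forms $\mathbb{H}\mathcal{L}_{k}=\underline{\bm{\alpha}}\alpha^{k}+\underline{\bm{\beta}}\beta^{k}$ and $\mathbb{H}\mathcal{F}_{k}=(\underline{\bm{\alpha}}\alpha^{k}-\underline{\bm{\beta}}\beta^{k})/(\alpha-\beta)$, observe that the $\underline{\bm{\alpha}}^{2}$ and $\underline{\bm{\beta}}^{2}$ terms cancel and that both mixed terms carry the common coefficient $(\alpha\beta)^{n}(\alpha^{s}\beta^{r}-\alpha^{r}\beta^{s})=(-q)^{n+r}(\alpha-\beta)\mathcal{F}_{s-r}$, then evaluate $\underline{\bm{\alpha}}\underline{\bm{\beta}}+\underline{\bm{\beta}}\underline{\bm{\alpha}}$ via (\ref{eq:3}). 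Your remark that only the symmetric sum (\ref{eq:3}) is needed, rather than (\ref{eq:1}) and (\ref{eq:2}) separately, is a fair (minor) sharpening of the paper's citation but not a different argument.
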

\begin{proof}
The Binet formulas for the hybrid $(p,q)$-Lucas and hybrid $(p,q)$-Fibonacci numbers give
\begin{align*}
(\alpha-\beta)(\mathbb{H}\mathcal{L}_{n+r}\mathbb{H}\mathcal{F}_{n+s}&-\mathbb{H}\mathcal{L}_{n+s}\mathbb{H}\mathcal{F}_{n+r})\\
&=\left(\underline{\bm{\alpha}}\alpha^{n+r}+\underline{\bm{\beta}}\beta^{n+r}\right)\left(\underline{\bm{\alpha}}\alpha^{n+s}-\underline{\bm{\beta}}\beta^{n+s}\right)\\
&\ \ -\left(\underline{\bm{\alpha}}\alpha^{n+s}+\underline{\bm{\beta}}\beta^{n+s}\right)\left(\underline{\bm{\alpha}}\alpha^{n+r}-\underline{\bm{\beta}}\beta^{n+r}\right)\\
&=(\alpha \beta)^{n}(\alpha^{s}\beta^{r}-\alpha^{r}\beta^{s})(\underline{\bm{\alpha}}\underline{\bm{\beta}}+\underline{\bm{\beta}}\underline{\bm{\alpha}}).
\end{align*}
Using Eqs. (\ref{eq:1}) and (\ref{eq:2}), we have
$$\mathbb{H}\mathcal{L}_{n+r}\mathbb{H}\mathcal{F}_{n+s}-\mathbb{H}\mathcal{L}_{n+s}\mathbb{H}\mathcal{F}_{n+r}=2(-q)^{n+r}\mathcal{F}_{s-r}(\mathbb{H}\mathcal{L}_{0}-(q^{3}+pq-q+1)).$$ The proof is completed.
\end{proof}

After deriving these famous identities, we present some other identities for the generalized hybrid Fibonacci numbers. In particular, when using the Binet formulas to obtain identities for the hybrid $(p,q)$-Fibonacci and hybrid $(p,q)$-Lucas numbers, we require $\underline{\bm{\alpha}}^{2}$ and $\underline{\bm{\beta}}^{2}$. These products are given in the next lemma.
\begin{lemma}
We have
\begin{equation}\label{eq:8}
\underline{\bm{\alpha}}^{2}=(\mathbb{H}\mathcal{L}_{0}+r_{p,q})+(\alpha-\beta)(\mathbb{H}\mathcal{F}_{0}+s_{p,q})
\end{equation}
and
\begin{equation}\label{eq:9}
\underline{\bm{\beta}}^{2}=(\mathbb{H}\mathcal{L}_{0}+r_{p,q})-(\alpha-\beta)(\mathbb{H}\mathcal{F}_{0}+s_{p,q}),
\end{equation}
where $r_{p,q}=-1+\frac{p}{2}(\mathcal{F}_{6}+2\mathcal{F}_{3}-\mathcal{F}_{2})+q(\mathcal{F}_{5}+2\mathcal{F}_{2}-\mathcal{F}_{1})$, $s_{p,q}=\frac{1}{2}(\mathcal{F}_{6}+2\mathcal{F}_{3}-\mathcal{F}_{2})$ and $\mathcal{F}_{n}$ is the $n$-th $(p,q)$-Fibonacci number.
\end{lemma}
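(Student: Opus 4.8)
The plan is to compute $\underline{\bm{\alpha}}^{2}$ directly from the multiplication Table~\ref{table:1}, exactly as $\underline{\bm{\alpha}}\underline{\bm{\beta}}$ was treated in the previous lemma, and then to rewrite the four components in terms of $(p,q)$-Fibonacci and $(p,q)$-Lucas numbers. Since the coefficients $1,\alpha,\alpha^{2},\alpha^{3}$ are real scalars, squaring automatically symmetrizes the basis products: writing $(e_{0},e_{1},e_{2},e_{3})=(1,\textbf{i},\bm{\varepsilon},\textbf{h})$ I would expand
\begin{equation*}
\underline{\bm{\alpha}}^{2}=\sum_{j=0}^{3}\alpha^{2j}e_{j}^{2}+\sum_{0\le j<k\le 3}\alpha^{j+k}(e_{j}e_{k}+e_{k}e_{j}).
\end{equation*}
The diagonal part contributes $1-\alpha^{2}+\alpha^{6}$ (from $\textbf{i}^{2}=-1$, $\bm{\varepsilon}^{2}=0$, $\textbf{h}^{2}=1$). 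Among the symmetric cross terms the only nonzero ones are $e_{0}e_{1}+e_{1}e_{0}=2\textbf{i}$, $e_{0}e_{2}+e_{2}e_{0}=2\bm{\varepsilon}$, $e_{0}e_{3}+e_{3}e_{0}=2\textbf{h}$ and $\textbf{i}\bm{\varepsilon}+\bm{\varepsilon}\textbf{i}=(1-\textbf{h})+(1+\textbf{h})=2$, while $\textbf{ih}+\textbf{hi}=0$ and $\bm{\varepsilon}\textbf{h}+\textbf{h}\bm{\varepsilon}=0$ cancel. This gives
\begin{equation*}
\underline{\bm{\alpha}}^{2}=(1-\alpha^{2}+2\alpha^{3}+\alpha^{6})+2\alpha\textbf{i}+2\alpha^{2}\bm{\varepsilon}+2\alpha^{3}\textbf{h}.
\end{equation*}

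Next I would translate each component using $\alpha^{n}+\beta^{n}=\mathcal{L}_{n}$ and $\alpha^{n}-\beta^{n}=(\alpha-\beta)\mathcal{F}_{n}$, together with $\alpha+\beta=p$ and $\alpha\beta=-q$. The relations $2\alpha=\mathcal{L}_{1}+(\alpha-\beta)\mathcal{F}_{1}$, $2\alpha^{2}=\mathcal{L}_{2}+(\alpha-\beta)\mathcal{F}_{2}$, $2\alpha^{3}=\mathcal{L}_{3}+(\alpha-\beta)\mathcal{F}_{3}$ show that the $\textbf{i},\bm{\varepsilon},\textbf{h}$ parts of $\underline{\bm{\alpha}}^{2}$ already coincide with those of $\mathbb{H}\mathcal{L}_{0}+(\alpha-\beta)\mathbb{H}\mathcal{F}_{0}$, so the scalars $r_{p,q}$ and $s_{p,q}$ are only needed to correct the real part.

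For the real part I would pair the $\alpha$-expression against the corresponding $\beta$-expression appearing in $\underline{\bm{\beta}}^{2}$. The half-sum fixes $r_{p,q}$ and the half-difference fixes $s_{p,q}$: using $\alpha^{6}+\beta^{6}=\mathcal{L}_{6}$, $\alpha^{6}-\beta^{6}=(\alpha-\beta)\mathcal{F}_{6}$ and the analogous lower-power identities, I obtain $2+r_{p,q}=1-\frac{1}{2}\mathcal{L}_{2}+\mathcal{L}_{3}+\frac{1}{2}\mathcal{L}_{6}$ and $s_{p,q}=\frac{1}{2}(\mathcal{F}_{6}+2\mathcal{F}_{3}-\mathcal{F}_{2})$; the latter is already in the claimed form. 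Converting the Lucas numbers in $r_{p,q}$ via $\mathcal{L}_{n}=p\mathcal{F}_{n}+2q\mathcal{F}_{n-1}$ turns $\frac{1}{2}(-\mathcal{L}_{2}+2\mathcal{L}_{3}+\mathcal{L}_{6})$ into $\frac{p}{2}(\mathcal{F}_{6}+2\mathcal{F}_{3}-\mathcal{F}_{2})+q(\mathcal{F}_{5}+2\mathcal{F}_{2}-\mathcal{F}_{1})$, which is the stated expression for $r_{p,q}$. The identity for $\underline{\bm{\beta}}^{2}$ then follows by interchanging $\alpha\leftrightarrow\beta$, reversing only the sign of the $(\alpha-\beta)$-coefficient, which is why Eq.~(\ref{eq:9}) carries a minus sign.

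I expect the non-commutativity to cause no trouble here, precisely because squaring symmetrizes the off-diagonal products; the genuine work is the bookkeeping of the real part, which unlike the other three components collects contributions from three distinct basis squarings together with the surviving anticommutator $\textbf{i}\bm{\varepsilon}+\bm{\varepsilon}\textbf{i}=2$. The main obstacle is then recognizing the right Lucas-to-Fibonacci conversion $\mathcal{L}_{n}=p\mathcal{F}_{n}+2q\mathcal{F}_{n-1}$ that collapses the power-sum real part into the compact closed forms for $r_{p,q}$ and $s_{p,q}$.
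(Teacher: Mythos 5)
Your proposal is correct and follows essentially the same route as the paper: both expand $\underline{\bm{\alpha}}^{2}$ term by term from the multiplication table to reach $(1-\alpha^{2}+2\alpha^{3}+\alpha^{6})+2\alpha\textbf{i}+2\alpha^{2}\bm{\varepsilon}+2\alpha^{3}\textbf{h}$ and then split into symmetric and $(\alpha-\beta)$-antisymmetric parts. The only (cosmetic) difference is in the real part, where the paper first reduces powers via $\alpha^{n}=\mathcal{F}_{n}\alpha+q\mathcal{F}_{n-1}$ and then writes $\alpha=\tfrac{1}{2}(p+(\alpha-\beta))$, while you use $2\alpha^{n}=\mathcal{L}_{n}+(\alpha-\beta)\mathcal{F}_{n}$ and convert with $\mathcal{L}_{n}=p\mathcal{F}_{n}+2q\mathcal{F}_{n-1}$; both land on the same $r_{p,q}$ and $s_{p,q}$.
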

\begin{proof}
From the definition of $\underline{\bm{\alpha}}$ and using $\textbf{i}^{2}=-1, \bm{\varepsilon}^{2}=0, \textbf{h}^{2}=1$, $ \textbf{ih}=-\textbf{hi}=\bm{\varepsilon}+\textbf{i}$ in Table \ref{table:1} and $\alpha^{n}=\mathcal{F}_{n}\alpha+q\mathcal{F}_{n-1}$ for $n\geq1$, we have
\begin{align*}
\underline{\bm{\alpha}}^{2}&=(1+\alpha \textbf{i}+\alpha^{2}\bm{\varepsilon}+\alpha^{3}\textbf{h})(1+\alpha \textbf{i}+\alpha^{2}\bm{\varepsilon}+\alpha^{3}\textbf{h})\\
&=2(1+\alpha \textbf{i}+\alpha^{2}\bm{\varepsilon}+\alpha^{3}\textbf{h})+(\alpha^{6}+2\alpha^{3}-\alpha^{2}-1)\\
&=2+2\alpha \textbf{i}+(2p\alpha+2q)\bm{\varepsilon}+((2p^{2}+2q)\alpha+2pq)\textbf{h})+(\alpha^{6}+2\alpha^{3}-\alpha^{2}-1)\\
&=2+p\textbf{i}+(p^{2}+2q)\bm{\varepsilon}+(p^{3}+3pq)\textbf{h}+(\alpha-\beta)(\textbf{i}+p\bm{\varepsilon}+(p^{2}+q)\textbf{h})\\
&\ \ +((\mathcal{F}_{6}\alpha+q\mathcal{F}_{5})+2(\mathcal{F}_{3}\alpha+q\mathcal{F}_{2})-(\mathcal{F}_{2}\alpha+q\mathcal{F}_{1})-1)\\
&=(\mathbb{H}\mathcal{L}_{0}+r_{p,q})+(\alpha-\beta)(\mathbb{H}\mathcal{F}_{0}+s_{p,q}),
\end{align*}
where $r_{p,q}=-1+\frac{p}{2}(\mathcal{F}_{6}+2\mathcal{F}_{3}-\mathcal{F}_{2})+q(\mathcal{F}_{5}+2\mathcal{F}_{2}-\mathcal{F}_{1})$ and $s_{p,q}=\frac{1}{2}(\mathcal{F}_{6}+2\mathcal{F}_{3}-\mathcal{F}_{2})$ and the final equation gives Eq. (\ref{eq:8}). The other can be computed similarly.
\end{proof}

We present some interesting identities for hybrid $(p,q)$-Fibonacci, hybrid $(p,q)$-Lucas numbers and generalized hybrid Fibonacci numbers.
\begin{theorem}
For any integer $n\geq 0$, we have
\begin{equation}\label{equa:9}
\mathbb{H}\mathcal{L}_{n}^{2}-\mathbb{H}\mathcal{F}_{n}^{2}=\left\lbrace 
\begin{array}{c}
\frac{p^{2}+4q-1}{p^{2}+4q}(\mathbb{H}\mathcal{L}_{0}+r_{p,q})\mathcal{L}_{2n}+(\mathbb{H}\mathcal{F}_{0}+s_{p,q})\mathcal{F}_{2n}  \\ 
+2\frac{(p^{2}+4q+1)(-q)^{n}}{p^{2}+4q}(\mathbb{H}\mathcal{L}_{0}-(q^{3}+pq-q+1)).
\end{array}%
\right\rbrace
\end{equation}
\end{theorem}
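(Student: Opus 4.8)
The plan is to reuse the Binet-formula machinery already exercised in Theorems~\ref{teO} and~\ref{teoD}, now applied to the two special sequences. First I would record the specialized Binet formulas by substituting into (\ref{binH}). Taking $a=0$, $b=1$ gives $A=B=1$, so that $\mathbb{H}\mathcal{F}_{n}=\frac{\underline{\bm{\alpha}}\alpha^{n}-\underline{\bm{\beta}}\beta^{n}}{\alpha-\beta}$; taking $a=2$, $b=p$ and using $\alpha+\beta=p$ gives $A=p-2\beta=\alpha-\beta$ and $B=p-2\alpha=-(\alpha-\beta)$, so that $\mathbb{H}\mathcal{L}_{n}=\underline{\bm{\alpha}}\alpha^{n}+\underline{\bm{\beta}}\beta^{n}$. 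These two expressions are the inputs I would square.

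Next I would expand each square, keeping in mind that the scalars $\alpha^{n},\beta^{n}$ commute but $\underline{\bm{\alpha}},\underline{\bm{\beta}}$ do not, so the two cross terms collapse into the \emph{symmetric} combination $\underline{\bm{\alpha}}\underline{\bm{\beta}}+\underline{\bm{\beta}}\underline{\bm{\alpha}}$, each weighted by $(\alpha\beta)^{n}=(-q)^{n}$. Writing $D=p^{2}+4q=(\alpha-\beta)^{2}$, this gives
\[
\mathbb{H}\mathcal{L}_{n}^{2}=\underline{\bm{\alpha}}^{2}\alpha^{2n}+\underline{\bm{\beta}}^{2}\beta^{2n}+(-q)^{n}\big(\underline{\bm{\alpha}}\underline{\bm{\beta}}+\underline{\bm{\beta}}\underline{\bm{\alpha}}\big),
\]
\[
\mathbb{H}\mathcal{F}_{n}^{2}=\tfrac{1}{D}\Big(\underline{\bm{\alpha}}^{2}\alpha^{2n}+\underline{\bm{\beta}}^{2}\beta^{2n}-(-q)^{n}\big(\underline{\bm{\alpha}}\underline{\bm{\beta}}+\underline{\bm{\beta}}\underline{\bm{\alpha}}\big)\Big).
\]
Subtracting, the ``diagonal'' block $\underline{\bm{\alpha}}^{2}\alpha^{2n}+\underline{\bm{\beta}}^{2}\beta^{2n}$ picks up the scalar factor $\frac{D-1}{D}$, while the symmetric block picks up $\frac{D+1}{D}(-q)^{n}$, which already produces the two rational prefactors appearing in (\ref{equa:9}).

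I would then feed in the two available lemmas. For the diagonal block I substitute (\ref{eq:8}) and (\ref{eq:9}): the common scalar-plus-hybrid part $(\mathbb{H}\mathcal{L}_{0}+r_{p,q})$ pairs with $\alpha^{2n}+\beta^{2n}=\mathcal{L}_{2n}$, whereas the antisymmetric part $(\alpha-\beta)(\mathbb{H}\mathcal{F}_{0}+s_{p,q})$ pairs with $\alpha^{2n}-\beta^{2n}=(\alpha-\beta)\mathcal{F}_{2n}$, contributing a further $(\alpha-\beta)^{2}=D$ that must be combined with the $\frac{D-1}{D}$ prefactor to fix the coefficient of $\mathcal{F}_{2n}$. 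For the symmetric block I invoke (\ref{eq:3}), replacing $\underline{\bm{\alpha}}\underline{\bm{\beta}}+\underline{\bm{\beta}}\underline{\bm{\alpha}}$ by $2(\mathbb{H}\mathcal{L}_{0}-(q^{3}+pq-q+1))$. Collecting the three resulting pieces yields the identity (\ref{equa:9}), and no identity beyond $\mathcal{L}_{2n}=\alpha^{2n}+\beta^{2n}$ and $(\alpha-\beta)\mathcal{F}_{2n}=\alpha^{2n}-\beta^{2n}$ is required.

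The main obstacle I anticipate is bookkeeping rather than conceptual. I must (i) preserve the order of the non-commuting factors $\underline{\bm{\alpha}},\underline{\bm{\beta}}$ so that only the symmetric sum survives and Lemma~(\ref{eq:3}) is applicable, and (ii) track precisely how the explicit $(\alpha-\beta)$ in (\ref{eq:8})--(\ref{eq:9}) interacts with the $\tfrac1D$ coming from the Fibonacci Binet formula, since this single step is exactly where the coefficient on $\mathcal{F}_{2n}$ and the factors $\frac{D-1}{D}$, $\frac{D+1}{D}$ are all generated; it is the step most prone to a dropped power of $(\alpha-\beta)$ and therefore the one I would verify most carefully.
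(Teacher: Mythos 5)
Your proposal is essentially identical to the paper's proof: the same expansion of the two squares into the diagonal block $\underline{\bm{\alpha}}^{2}\alpha^{2n}+\underline{\bm{\beta}}^{2}\beta^{2n}$ and the symmetric block $\underline{\bm{\alpha}}\underline{\bm{\beta}}+\underline{\bm{\beta}}\underline{\bm{\alpha}}$, with the same prefactors $\frac{p^{2}+4q-1}{p^{2}+4q}$ and $\frac{p^{2}+4q+1}{p^{2}+4q}(-q)^{n}$, followed by the same substitutions of (\ref{eq:8})--(\ref{eq:9}) and (\ref{eq:3}). One remark on the step you yourself single out as most error-prone: carrying it through, $(\alpha-\beta)(\alpha^{2n}-\beta^{2n})=(p^{2}+4q)\mathcal{F}_{2n}$ combined with the prefactor $\frac{p^{2}+4q-1}{p^{2}+4q}$ leaves a coefficient $p^{2}+4q-1$ on $(\mathbb{H}\mathcal{F}_{0}+s_{p,q})\mathcal{F}_{2n}$, not $1$ as displayed in (\ref{equa:9}); the paper's own proof exhibits the same discrepancy between its last line and the stated identity, so your computation is correct but your closing claim that the three pieces ``collect'' to (\ref{equa:9}) exactly as printed should be amended (or the statement itself corrected).
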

\begin{proof}
Using the Binet formulas for the hybrid $(p,q)$-Fibonacci and hybrid $(p,q)$-Lucas numbers, we obtain
\begin{align*}
(p^{2}+4q)(\mathbb{H}\mathcal{L}_{n}^{2}-\mathbb{H}\mathcal{F}_{n}^{2})&=(p^{2}+4q)\left(\underline{\bm{\alpha}}\alpha^{n}+\underline{\bm{\beta}}\beta^{n}\right)^{2}-\left(\underline{\bm{\alpha}}\alpha^{n}-\underline{\bm{\beta}}\beta^{n}\right)^{2}\\
&=(p^{2}+4q-1)(\underline{\bm{\alpha}}^{2}\alpha^{2n}+\underline{\bm{\beta}}^{2}\beta^{2n})\\
&\ \ +(p^{2}+4q+1)(\alpha \beta)^{n}(\underline{\bm{\alpha}}\underline{\bm{\beta}}+\underline{\bm{\beta}}\underline{\bm{\alpha}}).
\end{align*}
Substituting Eqs. (\ref{eq:1}) and (\ref{eq:2}) into the last equation, we have
\begin{equation}\label{eq:10}
\begin{aligned}
(p^{2}+4q)(\mathbb{H}\mathcal{L}_{n}^{2}-\mathbb{H}\mathcal{F}_{n}^{2})&=(p^{2}+4q-1)(\underline{\bm{\alpha}}^{2}\alpha^{2n}+\underline{\bm{\beta}}^{2}\beta^{2n})\\
&\ \ +2(p^{2}+4q+1)(\alpha \beta)^{n}(\mathbb{H}\mathcal{L}_{0}-(q^{3}+pq-q+1)).
\end{aligned}
\end{equation}
Then, using Eqs. (\ref{eq:8}) and (\ref{eq:9}), we obtain 
\begin{equation}\label{eq:11}
\begin{aligned}
\underline{\bm{\alpha}}^{2}\alpha^{2n}+\underline{\bm{\beta}}^{2}\beta^{2n}&=(\alpha^{2n}+\beta^{2n})(\mathbb{H}\mathcal{L}_{0}+r_{p,q})\\
&\ \ +(\alpha-\beta) (\mathbb{H}\mathcal{F}_{0}+s_{p,q})(\alpha^{2n}-\beta^{2n}).
\end{aligned}
\end{equation}
Substituting Eq. (\ref{eq:11}) into Eq. (\ref{eq:10}) gives Eq. (\ref{equa:9}).
\end{proof}

\begin{theorem}\label{teoF}
For any integers $m\geq n\geq 0$, we have
\begin{equation}\label{equa:12}
\mathbb{H}\mathcal{F}_{n}\mathbb{H}\mathcal{J}_{m}-\mathbb{H}\mathcal{J}_{m}\mathbb{H}\mathcal{F}_{n}=2(-q)^{n+1} \mathcal{J}_{m-n}(\mathbb{H}\mathcal{F}_{0}-\bm{\omega}),
\end{equation}
where $\bm{\omega}=(1-p)\textbf{i}-q\bm{\varepsilon}+(p^{2}+q+1)\textbf{h}$ and $\mathcal{J}_{n}=\frac{A\alpha^{n}-B\beta^{n}}{\alpha-\beta}$ is the $n$-th generalized Fibonacci number.
\end{theorem}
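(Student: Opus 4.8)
The plan is to reprise the Binet-formula computation that proved Theorems~\ref{teO} and~\ref{teoD}, the only new feature being that the two factors now sit in opposite orders, so the entire identity will be produced by the non-commutativity of $\underline{\bm{\alpha}}$ and $\underline{\bm{\beta}}$. First I would insert the Binet formula~(\ref{binH}) for $\mathbb{H}\mathcal{J}_{m}$ together with its specialization $a=0,\ b=1$ (so that $A=B=1$) for $\mathbb{H}\mathcal{F}_{n}$, and clear denominators by multiplying the commutator by $(\alpha-\beta)^{2}$. Expanding the two orderings
\begin{align*}
(\alpha-\beta)^{2}\,\mathbb{H}\mathcal{F}_{n}\mathbb{H}\mathcal{J}_{m}&=(\underline{\bm{\alpha}}\alpha^{n}-\underline{\bm{\beta}}\beta^{n})(A\underline{\bm{\alpha}}\alpha^{m}-B\underline{\bm{\beta}}\beta^{m}),\\
(\alpha-\beta)^{2}\,\mathbb{H}\mathcal{J}_{m}\mathbb{H}\mathcal{F}_{n}&=(A\underline{\bm{\alpha}}\alpha^{m}-B\underline{\bm{\beta}}\beta^{m})(\underline{\bm{\alpha}}\alpha^{n}-\underline{\bm{\beta}}\beta^{n})
\end{align*}
gives four terms apiece, and the two diagonal contributions $A\underline{\bm{\alpha}}^{2}\alpha^{n+m}$ and $B\underline{\bm{\beta}}^{2}\beta^{n+m}$ occur identically in both orderings. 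Hence they cancel in the difference, and---unlike the preceding theorem---Lemma~(\ref{eq:8})--(\ref{eq:9}) on $\underline{\bm{\alpha}}^{2},\underline{\bm{\beta}}^{2}$ plays no role here.

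Collecting the surviving off-diagonal terms, I expect them to organize into a single scalar multiple of the commutator of $\underline{\bm{\alpha}}$ and $\underline{\bm{\beta}}$, namely
\begin{equation*}
(\alpha-\beta)^{2}\bigl(\mathbb{H}\mathcal{F}_{n}\mathbb{H}\mathcal{J}_{m}-\mathbb{H}\mathcal{J}_{m}\mathbb{H}\mathcal{F}_{n}\bigr)=\bigl(A\alpha^{m}\beta^{n}-B\alpha^{n}\beta^{m}\bigr)\bigl(\underline{\bm{\alpha}}\underline{\bm{\beta}}-\underline{\bm{\beta}}\underline{\bm{\alpha}}\bigr).
\end{equation*}
At this point I invoke Lemma~(\ref{eq:1})--(\ref{eq:2}): subtracting~(\ref{eq:2}) from~(\ref{eq:1}) isolates exactly this commutator, $\underline{\bm{\alpha}}\underline{\bm{\beta}}-\underline{\bm{\beta}}\underline{\bm{\alpha}}=2q(\alpha-\beta)(\mathbb{H}\mathcal{F}_{0}-\bm{\omega})$, which supplies the hybrid factor $\mathbb{H}\mathcal{F}_{0}-\bm{\omega}$ occurring on the right of~(\ref{equa:12}).

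It then remains only to simplify the scalar. Using $\alpha\beta=-q$ I would pull out $(\alpha\beta)^{n}=(-q)^{n}$ from the remaining exponents, so that
\begin{equation*}
A\alpha^{m}\beta^{n}-B\alpha^{n}\beta^{m}=(-q)^{n}\bigl(A\alpha^{m-n}-B\beta^{m-n}\bigr)=(-q)^{n}(\alpha-\beta)\,\mathcal{J}_{m-n},
\end{equation*}
where the last equality is the scalar Binet formula $\mathcal{J}_{m-n}=\frac{A\alpha^{m-n}-B\beta^{m-n}}{\alpha-\beta}$, and where the hypothesis $m\geq n$ is precisely what keeps the exponent $m-n$ non-negative. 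Multiplying the two pieces, the coefficient of $\mathbb{H}\mathcal{F}_{0}-\bm{\omega}$ becomes $2q(-q)^{n}(\alpha-\beta)^{2}\mathcal{J}_{m-n}$; cancelling the $(\alpha-\beta)^{2}$ introduced at the start and rewriting $2q(-q)^{n}$ as a single power of $-q$ yields the identity~(\ref{equa:12}).

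The only genuine difficulty is the non-commutative bookkeeping: one must never merge $\underline{\bm{\alpha}}\underline{\bm{\beta}}$ with $\underline{\bm{\beta}}\underline{\bm{\alpha}}$ while collecting terms, since the whole right-hand side is carried by their difference. In the commutative (scalar Fibonacci) analogue all four off-diagonal terms would coalesce and the commutator would vanish identically; here the entire mechanism of the proof is that this commutator is nonzero and is evaluated explicitly by Lemma~(\ref{eq:1})--(\ref{eq:2}).
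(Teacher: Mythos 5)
Your proposal is correct and follows essentially the same route as the paper: multiply the commutator by $(\alpha-\beta)^{2}=p^{2}+4q$, observe that the diagonal terms $A\underline{\bm{\alpha}}^{2}\alpha^{n+m}$ and $B\underline{\bm{\beta}}^{2}\beta^{n+m}$ cancel, reduce the off-diagonal terms to $(A\alpha^{m}\beta^{n}-B\alpha^{n}\beta^{m})(\underline{\bm{\alpha}}\underline{\bm{\beta}}-\underline{\bm{\beta}}\underline{\bm{\alpha}})$, and evaluate the commutator as $2q(\alpha-\beta)(\mathbb{H}\mathcal{F}_{0}-\bm{\omega})$ before factoring out $(\alpha\beta)^{n}$ and invoking the scalar Binet formula. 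One caveat at the very last step you wave through: $2q(-q)^{n}=-2(-q)^{n+1}$, so the computation (yours, and equally the paper's own, whose final line likewise reads $-2(-q)^{n+1}\mathcal{J}_{m-n}(\mathbb{H}\mathcal{F}_{0}-\bm{\omega})$) actually yields the \emph{negative} of the right-hand side displayed in (\ref{equa:12}); this sign discrepancy lies in the paper's statement rather than in your argument.
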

\begin{proof}
The Binet formulas for the hybrid $(p,q)$-Fibonacci and generalized hybrid Fibonacci numbers give
\begin{align*}
(p^{2}+4q)(\mathbb{H}\mathcal{F}_{n}\mathbb{H}\mathcal{J}_{m}-\mathbb{H}\mathcal{J}_{m}\mathbb{H}\mathcal{F}_{n})&=\left(\underline{\bm{\alpha}}\alpha^{n}-\underline{\bm{\beta}}\beta^{n}\right)\left(A\underline{\bm{\alpha}}\alpha^{m}-B\underline{\bm{\beta}}\beta^{m}\right)\\
&\ \ -\left(A\underline{\bm{\alpha}}\alpha^{m}-B\underline{\bm{\beta}}\beta^{m}\right)\left(\underline{\bm{\alpha}}\alpha^{n}-\underline{\bm{\beta}}\beta^{n}\right)\\
&=(A\alpha^{m}\beta^{n}-B\alpha^{n}\beta^{m})(\underline{\bm{\alpha}}\underline{\bm{\beta}}-\underline{\bm{\beta}}\underline{\bm{\alpha}}).
\end{align*}
Using Eqs. (\ref{eq:1}) and (\ref{eq:2}), we have
\begin{align*}
\mathbb{H}\mathcal{F}_{n}\mathbb{H}\mathcal{J}_{m}-\mathbb{H}\mathcal{J}_{m}\mathbb{H}\mathcal{F}_{n}&=\frac{2q(\alpha \beta)^{n}}{p^{2}+4q}(A\alpha^{m-n}-B\beta^{m-n})(\alpha-\beta)(\mathbb{H}\mathcal{F}_{0}-\bm{\omega})\\
&=-2(-q)^{n+1} \mathcal{J}_{m-n}(\mathbb{H}\mathcal{F}_{0}-\bm{\omega}),
\end{align*}
where $\bm{\omega}=(1-p)\textbf{i}-q\bm{\varepsilon}+(p^{2}+q+1)\textbf{h}$ and $\mathcal{J}_{n}$ is the $n$-th generalized Fibonacci number defined by $\mathcal{J}_{n}=\frac{A\alpha^{n}-B\beta^{n}}{\alpha-\beta}$. So, the theorem is proved.
\end{proof}

Taking $m=n$ in the Theorem \ref{teoF} and using $\mathcal{J}_{0}=a$, we obtain the next identity.
\begin{corollary}
For any integer $n\geq 0$, we have
\begin{equation}\label{eq:13}
\mathbb{H}\mathcal{F}_{n}\mathbb{H}\mathcal{J}_{n}-\mathbb{H}\mathcal{J}_{n}\mathbb{H}\mathcal{F}_{n}=2a(-q)^{n+1}(\mathbb{H}\mathcal{F}_{0}-\bm{\omega}),
\end{equation}
where $A=b-a\beta$, $B=b-a\alpha$ and $\bm{\omega}=(1-p)\textbf{i}-q\bm{\varepsilon}+(p^{2}+q+1)\textbf{h}$.
\end{corollary}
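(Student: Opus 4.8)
The plan is to derive this identity as the $m=n$ specialization of Theorem \ref{teoF}. That theorem already establishes the commutator formula $\mathbb{H}\mathcal{F}_{n}\mathbb{H}\mathcal{J}_{m}-\mathbb{H}\mathcal{J}_{m}\mathbb{H}\mathcal{F}_{n}=2(-q)^{n+1}\mathcal{J}_{m-n}(\mathbb{H}\mathcal{F}_{0}-\bm{\omega})$ for all $m\geq n\geq 0$, so the whole task reduces to putting $m=n$ and evaluating the scalar coefficient $\mathcal{J}_{m-n}=\mathcal{J}_{0}$.

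First I would set $m=n$ in (\ref{equa:12}); the hypothesis $m\geq n\geq 0$ is satisfied with equality, so the identity applies. The left-hand side is immediately $\mathbb{H}\mathcal{F}_{n}\mathbb{H}\mathcal{J}_{n}-\mathbb{H}\mathcal{J}_{n}\mathbb{H}\mathcal{F}_{n}$, while the right-hand side becomes $2(-q)^{n+1}\mathcal{J}_{0}(\mathbb{H}\mathcal{F}_{0}-\bm{\omega})$. Next I would compute $\mathcal{J}_{0}$ from the Binet-type expression $\mathcal{J}_{n}=\frac{A\alpha^{n}-B\beta^{n}}{\alpha-\beta}$ recorded in Theorem \ref{teoF}. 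Evaluating at $n=0$ gives $\mathcal{J}_{0}=\frac{A-B}{\alpha-\beta}$, and substituting $A=b-a\beta$ and $B=b-a\alpha$ yields $A-B=(b-a\beta)-(b-a\alpha)=a(\alpha-\beta)$, so that $\mathcal{J}_{0}=a$. Inserting this into the right-hand side produces exactly $2a(-q)^{n+1}(\mathbb{H}\mathcal{F}_{0}-\bm{\omega})$, which is (\ref{eq:13}).

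There is no genuine obstacle in this corollary: all of the non-commutative content lives in Theorem \ref{teoF} through the appearance of $\underline{\bm{\alpha}}\underline{\bm{\beta}}-\underline{\bm{\beta}}\underline{\bm{\alpha}}$, and what remains is a one-line specialization. The only point deserving any care is the elementary evaluation $\mathcal{J}_{0}=a$, which follows at once from the definitions of $A$ and $B$; in particular no further use of the hybrid multiplication Table \ref{table:1} is needed here, since the coefficient $\mathcal{J}_{m-n}$ is a real scalar and simply passes through the hybrid factor $\mathbb{H}\mathcal{F}_{0}-\bm{\omega}$ unchanged.
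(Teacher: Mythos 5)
Your proposal is correct and follows exactly the paper's route: the corollary is obtained by setting $m=n$ in Theorem \ref{teoF} and using $\mathcal{J}_{0}=a$, which you verify directly from $\mathcal{J}_{0}=\frac{A-B}{\alpha-\beta}$ with $A=b-a\beta$, $B=b-a\alpha$. Nothing further is needed.
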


\section{Conclusions}
There are differences between the hybrid generalized Fibonacci numbers and the coefficient generalized Fibonacci quaternions. For example, the usual coefficient generalized Fibonacci quaternionic units are $\textbf{i}^{2}=\textbf{j}^{2}=\textbf{k}^{2}=\textbf{ijk}^{2}=-1$ whereas the hybrid generalized Fibonacci quaternionic units are $\textbf{i}^{2}=-1, \bm{\varepsilon}^{2}=0, \textbf{h}^{2}=1$ and $\textbf{ih}=-\textbf{hi}=\bm{\varepsilon}+\textbf{i}$. 

In this work, we have examined a new type of numbers, which are non-commutative. We named this number set as generalized hybrid Fibonacci numbers because it is a linear combination of well-known complex, hyperbolic and dual Fibonacci numbers. We have given the relation $\textbf{ih}=-\textbf{hi} =\bm{\varepsilon}+\textbf{i}$ between the units $\{\textbf{i}, \bm{\varepsilon}, \textbf{h}\}$ of these three number systems, and we have seen the algebraic consistency of this relation. Thus, we have obtained some properties of the generalized hybrid Fibonacci numbers.

%%%%%%%%%%%%%%%%%%%%%%%%%%%%%%%%%%%%%%%%%%%%%%%%%%%%%%%%%%%%%%%%

%%%%%%%%%%%%%%%%%%%%%%%%%%%%%%%%%%%%%%%%%%%%%%%%%%%%%%%%%%%%%%%%%%%

\begin{thebibliography}{9}
\bibitem{Ca} 
F. Catoni, D. Boccaletti, R. Cannata, V. Catoni, E. Nichelatti and P. Zampetti, \textit{The Mathematics of Minkowski Space-Time: With an Introduction to Commutative Hypercomplex Numbers}. Birkh\"auser, Basel (2008).
\bibitem{Cer} G. Cerda-Morales, \textit{On generalized Fibonacci and Lucas numbers by matrix methods}, Hacettepe journal of mathematics and statistics, 42(2) (2013), 173--179.
\bibitem{Co} 
J. Cockle, \textit{On systems of algebra involving more than one imaginary}, Philosophical Magazine III 35(238) (1849), 434--435.
\bibitem{Ci-Ipe} 
C.B. \c{C}imen and  A. \.Ipek, \textit{On Pell quaternions and Pell-Lucas quaternions}, Adv. Appl. Clifford Algebras 26 (2016), 39--51.
\bibitem{Fre-Li} 
I. Frenkel and M. Libine, \textit{Split quaternionic analysis and separation of the series for $SL(2,\mathbb{R})$ and $SL(2,\mathbb{C})/SL(2,\mathbb{R})$}, Advances in Mathematics 228(2) (2011), 678--763.
\bibitem{Gu} 
M.A. G\"ung\"or and A.Z. Azak, \textit{Investigation of dual-complex Fibonacci, dual-complex Lucas numbers and their properties}, Adv. Appl. Clifford Algebras 27(4) (2017), 3083--3096.
\bibitem{Hal1} S. Halici, \textit{On Fibonacci quaternions}, Adv. Appl. Clifford Algebras, 22 (2012), 321--327.
\bibitem{Hal2} S. Halici, \textit{On complex Fibonacci quaternions}, Adv. Appl. Clifford Algebras, 23 (2013), 105--112.
\bibitem{Hal3}
S. Halici and A. Karata\c{s}, \emph{On a generalization for Fibonacci quaternions}, Chaos, Solitons and Fractals, 98 (2017), 178–-182.
\bibitem{Har}
A.A. Harkin and J.B. Harkin, \textit{Geometry of generalized complex numbers}, Math. Mag. 77(2) (2004), 118--129.
\bibitem{Hor1} A. F. Horadam, \textit{Complex Fibonacci numbers and Fibonacci quaternions}, Am. Math. Month., 70 (1963), 289--291.
\bibitem{Hor2} A. F. Horadam, \textit{Quaternion recurrence relations}, Ulam Quarterly, 2 (1993), 23--33 .
\bibitem{Hor3} A. F. Horadam, \textit{Jacobsthal representation numbers}, Fibonacci Quarterly, 34 (1996), 40--54.
\bibitem{Ipe}
A. Ipek, \textit{On $(p,q)$-Fibonacci quaternions and their Binet formulas, generating functions and certain binomial sums}, Adv. Appl. Clifford Algebras 27 (2017), 1343--1351.
\bibitem{Ive} M. R. Iyer, \textit{A note on Fibonacci quaternions}, Fibonacci Quaterly 7(3) (1969), 225--229.
\bibitem{Kal} D. Kalman, \textit{Generalized Fibonacci numbers by matrix methods}, Fibonacci Quaterly, 20(1) (1982), 73--76.
\bibitem{Ku-Ya} 
L. Kula and Y. Yayl\i, \textit{Split quaternions and rotations in semi Euclidean space $\mathbb{E}24$}, Journal of the Korean Mathematical Society 44(6) (2007), 1313--1327.
\bibitem{Ola} 
S. Olariu, \textit{Complex Numbers in $n$ Dimensions}, North Holland, Series: North-Holland Mathematics Studies, vol. 190.
\bibitem{Oz} 
M. \"Ozdemir, \textit{Introduction to hybrid numbers}, Adv. Appl. Clifford Algebras (2018) 28:11.
\bibitem{Oz-Er} 
M. \"Ozdemir and A.A. Ergin, \textit{Rotations with unit timelike quaternions in Minkowski 3-space}, Journal of Geometry and Physics, 56(2) (2006), 322--336. 
\bibitem{Szy-Wl} A. Szynal-Liana and I. W\l och, \textit{A Note on Jacobsthal Quaternions}, Adv. Appl. Clifford Algebras, 26 (2016), 441--447.
\bibitem{Za} 
R.G. Zaripov, \textit{Conformal hyperbolic numbers and two-dimensional Finsler geometry}, Adv. Appl. Clifford Algebras 27(2) (2017), 1741--1760.
\end{thebibliography}
\end{document}